\let\@wraptoccontribs\wraptoccontribs
\newcommand{\abar}{\bar{a}}
\newcommand{\bbar}{\bar{b}}
\newcommand{\xtil}{\tilde{x}}
\newcommand{\xbar}{\bar{x}}
\newcommand{\ytil}{\tilde{y}}
\newcommand{\ybar}{\bar{y}}
\newcommand{\alphabar}{\bar{\alpha}}
\newcommand{\betabar}{\bar{\beta}}
\newcommand{\etabar}{\bar{\eta}}
\newcommand{\thetabar}{\bar{\theta}}
\newcommand{\inject}{\hookrightarrow}
\newcommand{\surject}{\twoheadrightarrow}
\newcommand{\discr}{\operatorname{discr}}
\newcommand\dcl{\operatorname{dcl}}
\def\FF{{\mathbb F}}
\def\NN{{\mathbb N}}
\def\PP{{\mathbb P}}
\def\QQ{{\mathbb Q}}
\def\RR{{\mathbb R}}
\def\ZZ{{\mathbb Z}}
\newcommand{\Auxc}{\mathcal{S}}  
\newcommand{\Auxe}{\mathcal{T}}  
\newcommand{\canc}{\mathfrak{s}}  
\newcommand{\cane}{\mathfrak{t}}  
\newcommand{\Sp}{\operatorname{Sp}}
\newcommand{\Aux}{\mathcal{A}}
\newcommand{\Main}{\mathcal{M}}
\newcommand{\Loag}{L_{\mathrm{oag}}}
\newcommand{\Lint}{L_{\mathrm{qe}}}
\newcommand{\Lsyn}{L_{\mathrm{syn}}}
\newcommand{\Lpres}{L_{\mathrm{Pres}}}
\newcommand{\conv}{^{\mathrm{conv}}}
\newcommand{\bigdcup}{\mathop{\dot{\bigcup}}}
\newcommand{\dcup}{\mathrel{\dot{\cup}}}
\newcommand{\diamrel}{\mathrel{\diamond}}
\newtheorem{theorem}{Theorem}[section]
\newtheorem{lem}[theorem]{Lemma}
\newtheorem{cor}[theorem]{Corollary}
\newtheorem{prop}[theorem]{Proposition}
\theoremstyle{definition}
\newtheorem{defn}[theorem]{Definition}
\newtheorem{notation}[theorem]{Notation}
\newtheorem{rem}[theorem]{Remark}
\newtheorem{def-prop}[theorem]{Proposition-Definition}
\newtheorem{def-theorem}[theorem]{Theorem-Definition}
\newtheorem{def-lem}[theorem]{Lemma-Definition}
\theoremstyle{remark}
\let\subsetold=\subset
\renewcommand{\subset}{\mathrel{{\color{red}\underset{\rlap{Replace by subseteq!}}{\subsetold}}}}
\let\supsetold=\supset
\renewcommand{\supset}{\mathrel{{\color{red}\underset{\rlap{Replace by supseteq!}}{\supsetold}}}}
\newcommand{\ciff}{\mathrel{\mathord{\iff}\llap{$(\,\,)$}}}
\begin{document}

\title{Quantifier elimination in ordered abelian groups}

\author{Raf Cluckers}
\address{Universit\'e Lille 1, Laboratoire Painlev\'e, CNRS - UMR 8524, 
Cit\'e Scientifique, 59655
Villeneuve d'Ascq Cedex, France, and,
Katholieke Universiteit Leuven, Department of Mathematics,
Celestijnenlaan 200B, B-3001 Leu\-ven, Bel\-gium}
\email{Raf.Cluckers@math.univ-lille1.fr}
\urladdr{http://math.univ-lille1.fr/$\sim$cluckers}

\author{Immanuel Halupczok}
\address{Institut f\"ur Mathematische Logik und Grundlagenforschung,
Universit\"at M\"unster,
Einsteinstra\ss e 62,
48149 M\"unster,
Germany}
\email{math@karimmi.de}
\urladdr{http://www.immi.karimmi.de/en.math.html}
\thanks{The second author was supported by the SFB~878 of the Deutsche Forschungsgemeinschaft}

\subjclass[2010]{06F20, 03C60, 03C64}
\keywords{ordered abelian groups; quantifier elimination; cell
decomposition; piecewise linear; model theory; ordered sets; Presburger language}

\begin{abstract}
We give a new proof of quantifier elimination in the theory of all ordered abelian groups in a suitable language. More precisely, this is only
``quantifier elimination relative to ordered sets'' in the following sense.
Each definable set in the group is a union of a family of quantifier free definable sets,
where the parameter of the family runs over a set definable (with quantifiers) in a
sort which carries the structure of an ordered set with some additional unary
predicates.

As a corollary, we find that all definable functions in ordered abelian groups are piecewise affine linear on finitely many definable pieces.
\end{abstract}

\maketitle


\section*{Introduction}

Quantifier elimination is well known in some particular ordered abelian groups
like $\QQ$ and $\ZZ$. Somewhat less well known is that there also exists a
quantifier elimination result for the theory of all ordered abelian groups.
For sentences, this has already been proven by Gurevich \cite{Gur.oAGelProp} in 1964.
Later, Gurevich and Schmitt enhanced this to
treat arbitrary formulas (\cite{Gur.decProb}, \cite{Sch.habil}).
The main goal of the present article is to introduce a new
language $\Lint$ with similar kind of quantifier elimination, which is more intuitive
and hopefully more useful for applications.

As a corollary, we obtain that every definable function $f\colon G^n \to G$
in ordered abelian groups is piecewise linear, i.e., there exists a partition of $G^n$
into finitely many definable sets such that the restriction of $f$ to any of these
sets is of the form $f(x_1, \dots, x_n) = \frac{1}{s}(\sum_i r_ix_i + b)$ with $r_i, s \in \ZZ$
and $b \in G$.
This result has been proven in the special case of groups of finite regular rank
by Belegradek-Verbovskiy-Wagner \cite{BVW.ordGrp} (using a version of quantifier
elimination in this context from Weispfenning, \cite{Wei.ordAbGrp}),
but to our knowledge, it has yet not been written down in full generality before.
Our interest in this result came from valued fields. In the $p$-adics, definable maps
can piecewise be approximated by fractional polynomials; see \cite{iC.lipQp}. To
get a similar result in valued fields with arbitrary value group, one necessary
ingredient is piecewise linearity of definable maps in the value group.

Our quantifier elimination result could be deduced rather easily from the results
of Gurevich and Schmitt. However, we discovered their results
only after we had already written our own complete proof. We decided to include our proof
in this article anyway to keep it self-contained and
because both \cite{Gur.decProb} and \cite{Sch.habil} are difficult to obtain.
Moreover we are using a more modern formalism; in particular, we shall see
that this kind of quantifier elimination naturally lives in many-sorted structures,
which were not used by Gurevich and Schmitt.

From now on, we write ``oag'' for ``ordered abelian group''.

There is no really simple language in which oags have quantifier elimination; the main reason is that oags may have many
convex definable subgroups, which come in several definable families.
Parametrizing one such family with 
a suitable imaginary sort yields a uniform way to interpret an arbitrary
ordered set in an appropriate oag.
Since ordered sets have no good quantifier elimination
language, the best one can hope for in oags is
``quantifier elimination relative to ordered sets''; this is indeed we get.

Let us examine more closely what is needed in a quantifier elimination language.
Recall that in the oag $\ZZ$, we have quantifier elimination in the
Presburger language $\Lpres := \{0, 1, +, <, \equiv_m\}$ (where $a \equiv_m b$ iff $a - b \in m\ZZ$).
The same language also yields quantifier elimination in any fixed
oag without (non-trivial) convex definable subgroup; in that case,
$1$ is defined to be the minimal positive element if this exists and $1 = 0$ otherwise.
If $G$ is a fixed group with finitely many convex definable subgroups $H$, then
the quotients $G/H$ are interpretable in $G$, and to get quantifier elimination,
it is necessary (and sufficient) to have $\Lpres$ not only on $G$, but also on all those quotients.

Now let us sketch the complete quantifier elimination language $\Lint$;
it should allow for oags with infinite families of convex definable subgroups and moreover we want to work in the theory of all oags and not just in
a fixed one. To treat 
infinite families of convex definable subgroups, we will add new sorts to $\Lint$ (called ``auxiliary sorts'')
with canonical parameters for some of them; let us write $G_\alpha$ for the group corresponding to the canonical parameter $\alpha$. We will still need the Presburger language on all quotients
$G/G_\alpha$; roughly this will be formalized as follows: each quantifier free binary $\Lpres$-relation
$x \diamrel y$ (for $x, y \in G/G_\alpha$) becomes a ternary relation $\xtil \diamrel_\alpha \ytil$
(for $\xtil, \ytil \in G$ and $\alpha$ in an auxiliary sort) which holds iff the images of $\xtil$ and $\ytil$ in $G/G_\alpha$ satisfy $\diamond$. (For example, for each $m \in \NN$, we have a relation $\xtil \equiv_{m,\alpha} \ytil$ which holds iff $\xtil - \ytil \in mG + G_\alpha$.)

Apart from that, three more things are needed in the language $\Lint$. On the auxiliary sorts, we have the order relation induced by inclusion of the corresponding subgroups and some unary predicates corresponding to certain
properties of the groups $G/G_\alpha$ (which otherwise could not be expressed without quantifiers);
moreover, we will need a variant of the congruence relation $\equiv_{m,\alpha}$ introduced above.

Our main result (Theorem~\ref{thm:qe-int}) is that 
in $\Lint$, we have ``quantifier elimination relative to the auxiliary sorts'' in the following strong sense.
Every definable subset in $G$ is a union of a family of quantifier
free definable sets, parametrized by an auxiliary set. This auxiliary set is
defined by a formula which may use quantifiers, but it uses only the auxiliary part of
$\Lint$ (i.e., some ordered sets with unary predicates).

This kind of relative quantifier elimination might sound weak, but despite the fact
that ordered sets have no good quantifier elimination,
their model theory is well understood; see e.g.\ \cite{Rub.linOrd}
or \cite[ch.\ 12.f]{Poi.modTh}. (This is also true for ordered sets with unary predicates,
also called ``colored chains''.) Relative quantifier elimination allows to lift
good model theoretic properties from ordered sets to oags;
for example, Gurevich and Schmitt did this for NIP
in \cite{GS.oAGNIP}. Other results about oags may be deduced directly
from relative quantifier elimination, without any knowledge of the auxiliary sorts at all;
an example for this is our corollary about piecewise linearity of definable maps.




To prove relative quantifier elimination in $\Lint$, it is useful to
simultaneously prove it in a second language $\Lsyn$ which has certain good syntactic
properties. These allow us to reduce relative quantifier elimination
to eliminating a single existential quantifier of a formula which contains no other quantifiers, as one does it in usual quantifier elimination.
This language $\Lsyn$ is very close to the one used by Gurevich and Schmitt
in their quantifier elimination results.

The article is organized as follows.

In Section~\ref{sect:results}, we present the main results: quantifier elimination in the languages
$\Lint$ and $\Lsyn$ (Theorems~\ref{thm:qe-int} and~\ref{thm:qe-form}) and piecewise linearity of definable functions (Corollary~\ref{cor:fctn}).
We also state the general result on relative quantifier elimination 
in languages with good syntactic properties (Proposition~\ref{prop:no-groups}).
In this section, the languages are defined as quickly as possible,
postponing explanations to the next section.
At the end of the section, we
explain the relation between $\Lsyn$ and the language used by Schmitt.

In Section~\ref{sect:details}, we prove some first basic properties of the languages,
which also yields some motivation. Then we show how to translate between
$\Lsyn$ and $\Lint$, allowing us to switch freely between those languages while doing
quantifier elimination.

Section~\ref{sect:proofs} contains the main proofs.
First, we prove Proposition~\ref{prop:no-groups}. Then we do the actual elimination
of one existential quantifier; this is done in the language $\Lint$.
The whole proof is constructive, so it can be turned into an algorithm
for quantifier elimination.

Section~\ref{sect:ex} contains some examples illustrating the language
$\Lint$; in particular, they show how arbitrary ordered sets can
be interpreted in oags.


\subsection{Acknowledgment}

We are very grateful to Fran\c{c}oise Delon for a lot of
interesting discussions and for several valuable concrete suggestions
concerning this article.

\section{The results}
\label{sect:results}

\subsection{Generalities and basic notation}

We use the convention that $0 \notin \NN$, and we write $\NN_0$ for
$\NN \cup \{0\}$ and $\PP$ for the set of primes.

In the whole article, $(G,+,<)$ will denote an ordered abelian group (``oag''), that is,
a group with a total order which is compatible with the group operation:
$a < b$ iff $a+c<b+c$ for all $a,b,c\in G$.
It is easy to see that such a group is always torsion free.
Such groups appear naturally, for example, as valuation group of
(Krull) valued fields.
An oag is called \emph{discrete}, if it has a minimal positive element
and \emph{dense} otherwise.

We write $\Loag = \{0, +, <\}$ for the language of oags
and unless stated otherwise, we always work in the theory of all oags.

For $a \in G$, we write $\langle a \rangle\conv$ for the smallest convex subgroup
of $G$ containing $a$; for $a, b \in G$ and $m \in \NN$, $a \equiv_m b$
means that $a$ and $b$ are congruent modulo $m$ in the sense that $a - b \in mG$.

\subsection{A language for quantifier elimination}
\label{sect:thm-qe}

We now give a precise definition of the quantifier elimination language $\Lint$; motivation and and additional explanations will be given in Section~\ref{sect:details}.
An introduction to $\Lint$ with much more motivation and examples can be found in \cite{i.oAGlang}.
Note that all of $\Lint$ will be $\Loag$-definable (where new sorts in $\Lint$ are considered as imaginary sorts of $\Loag$).

We start by introducing the new sorts of $\Lint$:
sorts with canonical parameters for some definable families of convex subgroups.
These new sorts will be
called \emph{auxiliary sorts}; in contrast, the sort of the ordered abelian group itself will be called the \emph{main sort}.



For each positive integer $n$, we consider three families of convex definable subgroups, parametrized by sorts which we denote by $\Auxc_n$, $\Auxe_n$, and $\Auxe^+_n$.
Although in $\Lint$ we will have these sorts only for $n$ prime, it is useful
to define them for all $n$. Examples illustrating the following definition are given in Section~\ref{sect:ex}.

\begin{defn}\label{defn:aux}
\begin{enumerate}
\item
For $n \in \NN$ and $a \in G \setminus nG$,
let $H_{a}$ be the largest convex subgroup of $G$ such that $a \notin H_{a} + nG$;
set $H_{a} = \{0\}$ if $a \in nG$.
Define
$\Auxc_n := G/\mathord{\sim}$, with $a \sim a'$ iff $H_{a} = H_{a'}$, and
let $\canc_n\colon G \surject \Auxc_n$ be the canonical map.
For $\alpha  = \canc_n(a) \in \Auxc_n$, define
$G_{\alpha} := H_{a}$.
\item
For $n \in \NN$ and $b \in G$, set
$H'_{b} := \bigcup_{\alpha \in \Auxc_n, b\notin G_\alpha} G_\alpha$,
where the union over the empty set is $\{0\}$.
Define
$\Auxe_n := G/\mathord{\sim}$, with $b \sim b'$ iff $H'_{b} = H'_{b'}$, and
let $\cane_n\colon G \surject \Auxe_n$ be the canonical map.
For $\alpha  = \cane_n(b) \in \Auxe_n$, define
$G_{\alpha} := H'_{b}$.
\item
For $n \in \NN$ and $\beta \in \Auxe_n$, define
$G_{\beta+} := \bigcap_{\alpha \in \Auxc_n, G_\alpha \supsetneq G_\beta} G_\alpha$,
where the intersection over the empty set is $G$.
Here, we view the index $\beta+$ as being an element of a copy of $\Auxe_n$
which we denote by $\Auxe^+_n$.
\item
Define a total preorder on
$\bigdcup_{n \in \NN}
(\Auxc_n \dcup \Auxe_n \dcup \Auxe_n^+)$
by $\alpha \le \alpha'$ iff
$G_\alpha \subseteq G_{\alpha'}$.
Write $\alpha \asymp \alpha'$ if $G_\alpha = G_{\alpha'}$.
\end{enumerate}
\end{defn}

Definability (in $\Loag$) of the groups $G_\alpha$, $\alpha \in \Auxc_n$ is proven in
Lemma~\ref{lem:def-Hng}; once this is done,
it is clear that the new sorts are imaginary sorts of $\Loag$ and that all of the above is definable.

\begin{rem}\label{rem:def-e+}
If $b \ne 0$, then we have $G_{\cane_n(b)+} =
\bigcap_{\alpha \in \Auxc_n, b\in G_\alpha} G_\alpha$;
in particular, $G_{\cane_n(b)+}$ is strictly bigger than
$G_{\cane_n(b)}$, since $b \notin G_{\cane_n(b)}$.
(However, we might have $G_{\cane_n(0)+} = \{0\}$.)
\end{rem}

Fix $\alpha$ in any of the auxiliary sorts.
Recall that for each
quantifier free $\Lpres$-definable relation on $G/G_\alpha$,
we want the corresponding relation on $G$ to be quantifier free definable in
$\Lint$. If $G/G_\alpha$ is dense, then it suffices to put preimages
of the relations $=$, $<$, $\equiv_m$ into $\Lint$ (interpreted as ternary relations,
where $\alpha$ is the third operand). However, if
$G/G_\alpha$ has a minimal positive element, then
we need $\Lint$-predicates for preimages of $\Lpres$-relations
defined using this element.
We introduce the following notation for these predicates.

\begin{defn}\label{defn:quot-literals}
Suppose that $\alpha \in \Auxc_n \cup \Auxe_n \cup \Auxe^+_n$
for some $n \in \NN$ and that
$\pi\colon G \surject G/G_\alpha$ is the canonical projection.
For $\diamond \in \{=, <, >, \le, \ge, \equiv_m\}$,
write $x \diamrel_\alpha y$ if $\pi(x) \diamrel \pi(y)$ holds
in $G/G_\alpha$.

For $k \in \ZZ$, write $k_\alpha$ for $k$ times
the minimal positive element of $G/G_\alpha$ if $G/G_\alpha$
is discrete and set $k_\alpha := 0 \in G/G_\alpha$ otherwise.
Write $x \diamrel_\alpha y + k_\alpha$ for
$\pi(x) \diamrel \pi(y) + k_\alpha$.
\end{defn}

Note that $x \equiv_{m,\alpha} y$ holds iff $x - y \in G_\alpha + mG$.
We will need one additional kind of
predicates which is similar, but
where $G_\alpha$ is replaced by a group which looks rather technical.
For definability of that group and for more explanations, see Section~\ref{subsect:cong*}.

\begin{defn}\label{defn:cong*}
For $n, m, m' \in \NN$ and
$\alpha \in \Auxc_n \cup \Auxe_n \cup \Auxe^+_n$, set
\[
G^{[m']}_{\alpha} :=
\!\!\!\!\!\!\!\!\!\!\!\!
\bigcap_{\substack{H \subseteq G \text{ convex subgroup}\\H \supsetneq G_{\alpha}}}
\!\!\!\!\!\!\!\!\!\!\!\!
(H + m'G);
\]
write $x \equiv^{[m']}_{m,\alpha} y$ iff $x - y \in G^{[m']}_{\alpha} + mG$.
\end{defn}

A separate notation for $x - y \in G^{[m']}_{\alpha}$ is not needed, since
$G^{[m']}_{\alpha} = G^{[m']}_{\alpha} + m'G$.

Finally, in $\Lint$ we will need a few unary predicates on the auxiliary sorts:
one saying whether the group $G/G_\alpha$ is discrete, and
some predicates specifying the cardinalities of
certain quotients of two groups of the form $G_\alpha + pG$ or
$G^{[p^s]}_{\alpha} + pG$. Since $pG$ is contained in the denominator
of those quotients, they are $\FF_p$ vector spaces,
and specifying the cardinality is equivalent to specifying the dimension
over $\FF_p$.

Here is the complete definition of $\Lint$:

\begin{defn}
The language $\Lint$ consists of the following:
\begin{itemize}
\item
The main sort $G$ with the constant $0$, the binary function $+$, and
the unary function $-$.
\item
For each $p \in \PP$, the auxiliary sorts $\Auxc_p$, $\Auxe_p$ and $\Auxe^+_p$ from
Definition~\ref{defn:aux}.
\item
For each $p, p' \in \PP$: binary relations ``$\alpha \le \alpha'$\,''
on $(\Auxc_p \dcup \Auxe_p \dcup \Auxe^+_p) \times (\Auxc_{p'} \dcup \Auxe_{p'} \dcup \Auxe^+_{p'})$,
defined by $G_\alpha \subseteq G_{\alpha'}$. (For each $p,p'$, these
are nine relations.)
\item Predicates for the relations $x_1 \diamrel_\alpha x_2 + k_\alpha$ from Definition~\ref{defn:quot-literals},
where $\diamond \in \{=, <, \equiv_m\}$,
$k \in \ZZ$, $m \in \NN$, and where $\alpha$ may be from any of the
sorts $\Auxc_p$, $\Auxe_p$ and $\Auxe^+_p$.
(These are ternary relations
on $G \times G \times \Auxc_p$, $G \times G \times \Auxe_p$, and
$G \times G \times \Auxe^+_p$.)
\item
For each $p \in \PP$ and each $m, m' \in \NN$,
the ternary relation $x \equiv^{[m']}_{m,\alpha} y$
on $G \times G \times \Auxc_p$.
\item
For each $p \in \PP$, a predicate $\discr(\alpha)$ on $\Auxc_p$
which holds iff $G/G_\alpha$ is discrete.
\item
For each $p \in \PP$, each $s \in \NN$, and each $\ell \in \NN_0$, two predicates on
$\Auxc_p$ defining the sets
\begin{align*}
\{\alpha \in \Auxc_p &\mid \dim_{\FF_p} (G^{[p^s]}_{\alpha} + pG)/(G^{[p^{s+1}]}_{\alpha} + pG) = \ell\}
\rlap{$\quad$and}
\\
\{\alpha \in \Auxc_p &\mid\dim_{\FF_p} (G^{[p^s]}_{\alpha} + pG)/(G_{\alpha} + pG) = \ell\}
.
\end{align*}
\end{itemize}
\end{defn}

\begin{notation}
We write $\Main := \{G\}$ for the main sort and
$\Aux := \{\Auxc_p, \Auxe_p, \Auxe^+_p\mid p \in \PP\}$ for the collection
of auxiliary sorts. By abuse of notation, we will also
write $\Aux$ for the union of the auxiliary sorts.
We will write that a formula is ``$\Main$-qf'' if
it does not contain any quantifier running over a main sort variable.
\end{notation}

The usual predicates $<$ and
$\equiv_m$ on $G$ are $\Main$-qf $\Lint$-definable: they are equivalent to
$<_{\alpha_0}$ and $\equiv_{m, \alpha_0}$,
where $\alpha_0$ is the minimal element of, say, $\Auxc_2$.
The canonical map $\Auxe_p \to \Auxe^+_p, \alpha \mapsto \alpha+$
is easily $\Main$-qf definable from the preorder on  $\Auxe_p \dcup \Auxe^+_p$
using Remark~\ref{rem:def-e+}.
We will later see $\Main$-qf definability of the canonical maps
$\canc_p$, $\cane_p$ (Lemma~\ref{lem:can-eq}) and of
the analogues on $\Auxe_p$ and $\Auxe^+_p$ of the discreteness and dimension
predicates (Lemmas \ref{lem:bullet-def} and~\ref{lem:dim-def}).
Moreover, Lemmas \ref{lem:prime-c} and~\ref{lem:prime-e} will show
how to get along without having $\Auxc_n$, $\Auxe_n$, $\Auxe^+_n$, $\canc_n$,
and $\cane_n$ for arbitrary $n$.

Note that although $\Auxe_p$ and $\Auxe^+_p$ are in definable bijection,
identifying them would make the language pretty messy,
in particular because the
preorder on $\bigdcup_p(\Auxc_p \dcup \Auxe_p)$ is \emph{not} enough
to define the preorder on the whole of $\Aux$ in an $\Main$-qf way.

As announced, our main result is ``quantifier elimination relative to
the auxiliary sorts'', which is more than just elimination of main sort quantifiers.
Now let us make this precise; we first need a definition.

\begin{defn}\label{defn:fam-union}
Suppose that $L$ is any language, $T$ is an $L$-theory, $\Main \dcup \Aux$ is a partition of the sorts of $L$,
and $\phi(\xbar, \etabar)$ is an $L$-formula, where $\xbar$ are $\Main$-variables
and $\etabar$ are $\Aux$-variables.
We say that $\phi(\xbar, \etabar)$ is in \emph{family union form} if it is of the form
\[
\phi(\xbar, \etabar) =
\bigvee_{i = 1}^k
\exists \thetabar\, \big(\xi_i(\etabar, \thetabar) \wedge \psi_i(\xbar, \thetabar)\big)
\]
where $\thetabar$ are $\Aux$-variables,
the formulas $\xi_i(\etabar, \thetabar)$ live purely in the sorts $\Aux$,
each $\psi_i(\xbar, \thetabar)$ is a conjunction of literals (i.e., of
atoms and negated atoms), and $T$ implies that
the $L(\Aux)$-formulas
$\{\xi_i(\etabar, \alphabar) \wedge \psi_i(\xbar, \alphabar) \mid
1 \le i \le k, \alphabar \in \Aux\}$
are pairwise inconsistent.
\end{defn}

\begin{theorem}\label{thm:qe-int}
In the theory of ordered abelian groups,
each $\Lint$-formula
is equivalent to an $\Lint$-formula in family union form.
\end{theorem}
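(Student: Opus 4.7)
The plan factors the proof into three pieces: an abstract reduction to single-quantifier elimination via Proposition~\ref{prop:no-groups}, the syntactic translation between $\Lint$ and $\Lsyn$ promised in Section~\ref{sect:details} that will let me switch languages at will, and the actual elimination of a single main-sort existential from a conjunction of $\Lint$-literals, which is where all the real content sits. The introduction already signals that the third piece carries the bulk of the difficulty.

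First, I would apply Proposition~\ref{prop:no-groups} to reduce the theorem to the following statement: for every conjunction of $\Lint$-literals $\phi(x,\bar y,\bar\eta)$ with $x$ a single $\Main$-variable, the formula $\exists x\,\phi$ is equivalent to one in family union form. The proposition is phrased abstractly for any language carrying an $\Main/\Aux$ partition with suitable syntactic hypotheses, and since those hypotheses are easier to verify in $\Lsyn$ than in $\Lint$, I would exploit the interdefinability of the two languages to pass freely between them. In effect this lets me assume during the single-quantifier step that $\phi$ has been brought into whichever convenient normal form is cleanest.

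For the single-quantifier step, after pulling $x$-free literals outside $\exists x$, I am left with a conjunction of literals of the shapes $x\diamrel_\alpha t+k_\alpha$ (for $\diamond\in\{=,<,\equiv_m\}$) and $x\equiv^{[m']}_{m,\alpha}t$, where $t$ is a term in $\bar y$ and $\alpha$ ranges over finitely many auxiliary elements drawn from $\bar\eta$ or obtained as $\canc_p(y_j)$, $\cane_p(y_j)$. I would order the occurring $\alpha$'s using the preorder $\le$ on $\Aux$ and analyze the solution set $S=\{x\in G:\phi(x,\bar y,\bar\eta)\}$: the order literals $<_\alpha$ cut $G$ into an intersection of ``fuzzy intervals'' with endpoints given by cosets $t+G_\alpha$, and the congruence literals further restrict $x$ to unions of arithmetic progressions inside each such piece. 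Non-emptiness of $S$ should then amount to a finite disjunction of cases, each built from an $\Aux$-only condition on $\bar\eta$ together with a $\Main$-qf condition on $\bar y$, matching Definition~\ref{defn:fam-union} exactly.

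The main obstacle will be merging congruence constraints living at different auxiliary scales. When $\alpha<\alpha'$, the congruences modulo $G_\alpha+mG$ and $G_{\alpha'}+mG$ are related in a way governed precisely by the $\FF_p$-dimension predicates in $\Lint$, and the distinction between $G_\alpha+mG$ and $G^{[m']}_\alpha+mG$ becomes unavoidable whenever convex subgroups accumulate immediately above $G_\alpha$ --- which is also exactly why the sort $\Auxe_p^+$ had to enter the language. To state non-emptiness uniformly, I expect to introduce fresh auxiliary witnesses $\bar\theta$, typically elements of $\Auxc_p$ or $\Auxe_p^+$ lying between the $\alpha$'s of $\bar\eta$, and to case-split on the discreteness and dimension predicates applied to these witnesses. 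Each case becomes one disjunct, and pairwise inconsistency of the disjuncts --- the last requirement in Definition~\ref{defn:fam-union} --- should fall out of the case split by construction. The whole procedure is constructive, consistent with the paper's claim that the proof yields an algorithm.
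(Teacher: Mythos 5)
Your overall decomposition matches the paper's: Proposition~\ref{prop:no-groups} as the abstract reduction to a single existential, interdefinability of $\Lsyn$ and $\Lint$ as the bridge, and a hands-on elimination of $\exists x$ from a conjunction of literals as the core. One imprecision worth flagging: Proposition~\ref{prop:no-groups} genuinely cannot be applied to $\Lint$, because $\Lint$'s ternary relations $x_1 \diamrel_\alpha x_2$ connect $\Main$ and $\Aux$ and thus violate the proposition's hypothesis that the only cross-sort symbols are functions $\Main \to \Aux$; one must first replace $\Lint$-formulas by $\Lsyn$-formulas (where the cross-sort symbols are the functions $\canc_{p^r}$, $\cane_p$ plus unary predicates on $G$), apply the proposition there, and only then translate back. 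You gesture at this with ``the hypotheses are easier to verify in $\Lsyn$,'' but ``easier'' should be ``only possible.'' Also, the proposition's conclusion for the single-quantifier step is merely ``no $\Main$-quantifiers''; the upgrade to family union form is a separate (easy) step via Lemma~\ref{lem:no-groups}.

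The substantive gap is in the single-quantifier step, which your sketch leaves at the level of intentions. The ``fuzzy intervals + case-split on discreteness and dimensions'' picture correctly locates where the difficulty lives, but it does not contain the mechanism that closes the argument. Concretely missing: (i) the normalization of the coefficients $r_i$ on $x$ (multiply every literal through and then quantify over $rG$ instead of $G$), without which the literals cannot be made to share a common left-hand side; (ii) the reduction of the inequality part to at most one lower and one upper bound and then, via a chain of comparisons at the scale $\gamma \asymp \max\{\alpha,\alpha',\cane_{m_0}(c_*-c'_*)\}$, to a single condition $x =_\delta c_*$ or a finite disjunction of such; and most importantly (iii) the congruence part, which the paper handles by an induction on the exponent $r$ of $p$ together with a genuine counting argument --- Lemma~\ref{lem:sat} describes when a set of the shape $(H_0+a_0)\setminus\bigcup_i(H_i+a_i)$ meets a coset of $G'=p^{r-1}G$ in terms of a sum of reciprocal indices $((H_0\cap G'):(H_i\cap G'))^{-1}$, and Lemma~\ref{lem:power-sum} caps the indices that matter so that the resulting condition becomes a finite boolean combination of the $\FF_p$-dimension predicates via Lemma~\ref{lem:dim-def}. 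Saying ``non-emptiness should amount to a finite disjunction of cases'' is precisely the claim that needs proving; without an analogue of this counting/bounding argument the proposal does not establish that the case split terminates in $\Main$-qf data, so there is a genuine gap here rather than a difference of route.
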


\begin{rem}\label{rem:polyhedron}
In $\Lint$, the formulas $\psi_i(\xbar, \thetabar)$ appearing in the family union form
are very simple. Without loss, each atom involves the main sort,
i.e., it is of the form $t(\xbar) \diamond_{\theta_\nu} t'(\xbar) + k_{\theta_\nu}$
where $t(\xbar), t'(\xbar)$ are $\ZZ$-linear combinations,
$\diamond \in \{=, <, \equiv_m, \equiv^{[m']}_{m}\}$, $\theta_\nu$ is one of the entries of $\thetabar$,
$k \in \ZZ$, and $m, m' \in \NN$ (where $k = 0$ if $\diamond$ is $\equiv^{[m']}_{m}$).
Moreover, ``$=$''-literals can be expressed using ``$<$'' and ``$>$'' instead.
Now the inequalities of $\psi_i$ define a convex polyhedron, and the remaining
literals ($\equiv_m, \not\equiv_m, \equiv^{[m']}_{m}, \not\equiv^{[m']}_{m}$)
are ``congruence conditions'' in the sense that each of them defines a set which
consists of entire cosets of
$mG$ (possibly for several different $m \in \NN$).
From this point of view, such sets are very similar to sets definable in $\Lpres$ by a conjunction of literals (which are also intersections of polyhedra with congruence conditions).
\end{rem}

\subsection{Definable functions are piecewise linear}

Using the above quantifier elimination theorem, it is easy to prove that
definable functions from $G^n$ to $G$ are piecewise linear. More precisely:

\begin{cor}\label{cor:fctn}
For any function $f\colon G^n \to G$ which is $\Loag$-definable with parameters
from a set $B$,
there exists a partition of $G^n$ into finitely many $B$-definable sets such that
on each such set $A$, $f$ is linear: there exist $r_{1}, \dots r_n,s \in \ZZ$ 
with $s\not=0$
and $b \in \dcl(B)$ such that for any $\abar \in A$, we have
$f(a_1, \dots, a_n) = \frac1s(\sum_i r_i a_i + b)$.
\end{cor}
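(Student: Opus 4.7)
The plan is to apply Theorem~\ref{thm:qe-int} to the graph of $f$, obtain from the resulting family union form a finite $B$-definable partition of $G^n$, and argue that on each part $f$ must be of the required affine-linear shape.

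View $\Gamma_f = \{(\xbar, y) \in G^{n+1} : f(\xbar) = y\}$ as a $B$-definable set.  By Theorem~\ref{thm:qe-int}, $\Gamma_f$ is equivalent to
$$
\bigcup_{i=1}^{k} \bigl\{(\xbar, y) : \exists \thetabar\,\bigl(\xi_i(\thetabar) \wedge \psi_i(\xbar, y, \thetabar)\bigr)\bigr\},
$$
with the instances $\xi_i(\alphabar) \wedge \psi_i(\xbar, y, \alphabar)$ pairwise inconsistent as $(i, \alphabar)$ varies.  For each $i$, set $A_i := \{\xbar \in G^n : \exists \thetabar\,(\xi_i(\thetabar) \wedge \psi_i(\xbar, f(\xbar), \thetabar))\}$; each $A_i$ is $B$-definable via the defining formula of $f$, and since $f$ is total and the pieces are pairwise disjoint, every $\xbar$ lies in exactly one $A_i$, so the $A_i$ partition $G^n$.

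Fix $i$ and $\abar \in A_i$ with witness $\alphabar$.  By pairwise disjointness, the piece $P_{i,\alphabar} := \{(\xbar, y) : \psi_i(\xbar, y, \alphabar)\}$ is contained in $\Gamma_f$, hence is the graph of a partial function.  By Remark~\ref{rem:polyhedron}, $\psi_i$ is a conjunction of $\ZZ$-linear literals of the form $t(\xbar, y) \diamrel_{\theta_\nu} t'(\xbar, y) + k_{\theta_\nu}$ with $\diamond \in \{<, \equiv_m, \equiv^{[m']}_{m}\}$.  Uniqueness of $y$ over each $\xbar$ in the fiber forces the conjunction to pin $y$ down by an equation $s y = \sum_j r_j x_j + b$ with $s, r_j \in \ZZ$ and $s \ne 0$.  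The key observation is that any auxiliary parameter $\theta_\nu$ appearing essentially in this pinning must satisfy $G_{\theta_\nu} = \{0\}$---otherwise $y$ would only be determined modulo the nontrivial convex subgroup $G_{\theta_\nu}$---and this forces $\theta_\nu$ to be the canonical minimum of its sort; hence $k_{\theta_\nu}$, and therefore $b$, lies in $\dcl(B)$ independently of $\alphabar$.  After possibly subdividing each $A_i$ by $B$-definable conditions so that one fixed pinning literal applies throughout the sub-piece, we obtain the required finite partition, and on each sub-piece $f$ has the claimed affine-linear form.

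The main obstacle is establishing that uniqueness of $y$ in the fiber really does force such a pinning equation with $B$-definable constant term.  Uniqueness can arise not only from a single absolute equation but also from combinations of strict inequalities with congruence conditions (especially in discrete quotients of $G$), so the case analysis must cover these configurations and, in each, verify that the auxiliary parameters that appear in the final pinning are forced to be canonical elements of their sorts rather than genuinely variable.
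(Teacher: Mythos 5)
Your proposal follows the same high-level strategy as the paper: apply Theorem~\ref{thm:qe-int} to the graph of $f$, read off a $B$-definable partition from the family union form, and on each piece argue that uniqueness of $y$ in the fiber, together with the observation that the relevant auxiliary parameter must correspond to the trivial subgroup, pins $y$ down affinely with constant term in $\dcl(B)$. That is indeed the right skeleton, and the observation about $G_{\theta_\nu}=\{0\}$ is the correct key fact.

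However, the step you flag at the end as the ``main obstacle'' is precisely the content of the proof, and you leave it unresolved. By Remark~\ref{rem:polyhedron} the literals of $\psi_i$ contain no equalities at all---only strict inequalities and (possibly negated) congruences---so the pinning equation $sy=\sum r_j x_j + b$ cannot be ``read off''; it must be extracted. Your heuristic ``otherwise $y$ would only be determined modulo $G_{\theta_\nu}$'' is too quick: congruence literals relative to other parameters could in principle sharpen the determination, so nontriviality of one $G_{\theta_\nu}$ is not by itself a contradiction. What actually closes the argument is this: take $m_0$ so that all congruence literals together cut out a union of cosets of $m_0G$; reduce to at most one lower and one upper bound; observe there must be a lower bound (else $c - m_0 d$ for $d>0$ would also satisfy $\psi$, violating uniqueness); for that bound $ry \vartriangleright_\alpha t(\abar)+k_\alpha$, the same shift with $d\in G_\alpha$, $d>0$ forces $G_\alpha=\{0\}$, whence $k_\alpha\in\dcl(\emptyset)$; and finally a dense/discrete dichotomy---in the dense case the feasible minimum equals the bound so $ry=t(\abar)$, in the discrete case the difference $d:=rc-t(\abar)$ is bounded by $rm_0+1$, giving finitely many further $B$-definable cases each of which yields $rc=t(\abar)+d$. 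Without this case analysis the proof is incomplete.
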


Let us prove this right away, since it illustrates nicely how Theorem~\ref{thm:qe-int} can be applied.

\begin{proof}
Let $\phi(\xbar, y)$ be an $\Lint(B)$-formula in family union form defining the graph of $f$, let $\abar \in G^n$ be a tuple, set $c := f(\abar)$,
and consider $\phi(\abar, y) \in \Lint(B\cup\abar)$, which defines the single element set $\{c\}$. (We do not write the parameters from $B$ explicitly.)
Using a case distinction, we may suppose that the family union form of
$\phi(\abar, y)$ consists of a single family:
\[
\phi(\abar, y) = \exists \thetabar\, \big(\xi(\thetabar) \wedge \psi(\abar, y, \thetabar)\big)
.
\]
Let $\betabar$ be the (unique) tuple of $\Aux$ such that $G \models \psi(\abar, c, \betabar)$.

As in Remark~\ref{rem:polyhedron}, we may assume that $\psi(\abar, y, \betabar)$ uses no ``$=$''. Moreover, we may choose an $m_0 \in \NN$ such that
all congruence conditions of $\psi(\abar, y, \betabar)$ together define a union
of cosets of $m_0G$.

Using further case distinctions (which are definable in $\abar$), we can assume:
all literals of $\psi(\abar, y, \betabar)$ involve $y$ and
among these literals, there is at most one lower and one upper bound on $y$.

There has to be a lower bound; otherwise,
for $d \in G$ with $d > 0$, the element $c - m_0d$ would also satisfy
$\psi(\abar, y, \betabar)$.
We may suppose that the lower bound is of the form
$ry \vartriangleright_{\alpha} t(\abar) + k_{\alpha}$, where
$\mathord{\vartriangleright} \in \{>, \ge\}$, $\alpha \in \Aux$,
and where $t$ is a main sort term, i.e., a $\ZZ$-linear combination of entries of
$\abar$ plus an element of $\dcl(B)$.
If $G_\alpha \supsetneq \{0\}$, then again $c - m_0d$ satisfies
$\psi(\abar, y, \betabar)$ if we take $d \in G_\alpha, d > 0$; hence
$G_\alpha = \{0\}$. In particular, $k_\alpha$ can be seen as an element of $G$
(and not just as a notation). From this point of view, we have
$k_\alpha \in \dcl(\emptyset)$, so without loss, the lower bound is of the form
$ry \vartriangleright_{\alpha} t(\abar)$.

Since $c$ is unique satisfying $\psi(\abar, y, \betabar)$, it
must be the minimal element satisfying $ry \vartriangleright_{\alpha} t(\abar)$
and the congruence conditions. Such a minimum can only exist if $G_\alpha = \{0\}$.
If $G$ is dense, then $m_0G$ is dense in $G$, so a minimum has to be equal to the lower bound;
thus $\psi$ is equivalent to $ry = t(\abar)$ and we are done.
If $G$ is discrete, then we do a case distinction on the difference
$d := rc - t(\abar)$. This difference can be at most $rm_0 + 1$ (otherwise $c - m_0$ would
also satisfy $\psi(\abar, y, \betabar)$), so there are only finitely many cases.
Fixing $d$ is a definable condition on $\abar$ and for fixed $d$, $\psi$ is equivalent to
$rc = t(\abar) + d$, which again is linear.
\end{proof}

\subsection{A language with good syntactic properties}

For usual quantifier elimination, it suffices to prove that the quantifier
of $\exists x\,\psi(x)$ can be eliminated when $\psi(x)$ is quantifier free.
This does not work for relative quantifier elimination: neither if we only
try to get rid of $\Main$-quantifiers (then $\psi$ can contain $\Aux$-quantifiers,
which can make it pretty complicated), nor if we want to get a formula
in family union form (in that case, the main difficulty turns
out to be that it is not
clear whether formulas in family union form are closed under negation).
The following general result allows us to do such a reasoning anyway
under some syntactic assumptions about the language.

\begin{prop}\label{prop:no-groups}
Let $L$ be a language and let $\Main \dcup \Aux$ be a partition of the sorts of $L$.
Suppose that the only symbols in $L$ connecting $\Main$ and $\Aux$
are functions from (products of) $\Main$-sorts to $\Aux$-sorts.
Let $T$ be an $L$-theory.

Consider a formula of the form
$\exists x\,\psi(x, \ybar, \etabar)$
where $x, \ybar$ are $\Main$-variables, $\etabar$ are $\Aux$-variables
and $\psi$ is quantifier free. Suppose that modulo $T$, any such
formula is equivalent to a formula without $\Main$-quantifiers.

Then modulo $T$, any $L$-formula is equivalent to
an $L$-formula in family union form.
\end{prop}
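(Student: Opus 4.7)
The plan is to prove two things separately: (A) every $\Main$-qf formula is equivalent to one in family union form, and (B) every $L$-formula is equivalent to a $\Main$-qf one. Together these give the proposition. The hypothesis of the proposition enters only in (B); step (A) is purely syntactic and leans on the assumption that no relation symbol connects $\Main$ and $\Aux$.

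For (A), I would first \emph{purify} $\phi(\xbar,\etabar)$: replace each maximal $\Main\to\Aux$ function subterm $g_i(\xbar)$ appearing in $\phi$ by a fresh $\Aux$-variable $\theta_{0,i}$, obtaining $\phi\equiv\exists\thetabar_0\bigl(\bigwedge_i\theta_{0,i}=g_i(\xbar)\wedge\phi'(\xbar,\etabar,\thetabar_0)\bigr)$ in which every atom of $\phi'$ is either a pure $\Main$-atom or a pure $\Aux$-atom. A routine induction on $\phi'$ (using that pure $\Main$-literals commute with $\Aux$-quantifiers, and handling $\forall$ via $\neg\exists\neg$) then yields $\phi'\equiv\bigvee_{j=1}^k\bigl(B_j(\xbar)\wedge A_j(\etabar,\thetabar_0)\bigr)$ with each $B_j$ a conjunction of $\Main$-literals and each $A_j$ a pure $\Aux$-formula. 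To obtain the required pairwise inconsistency I would Boolean-refine both sides: for $\sigma,\tau\in\{0,1\}^k$ set $C_\sigma:=\bigwedge_{\sigma(j)=1}A_j\wedge\bigwedge_{\sigma(j)=0}\neg A_j$ and $D_\tau:=\bigwedge_{\tau(j)=1}B_j\wedge\bigwedge_{\tau(j)=0}\neg B_j$; then $\phi\equiv\bigvee_{(\sigma,\tau)\in S}\exists\thetabar_0\bigl(C_\sigma\wedge D_\tau\wedge\bigwedge_i\theta_{0,i}=g_i(\xbar)\bigr)$ is family union form with $\xi_{(\sigma,\tau)}:=C_\sigma$ and $\psi_{(\sigma,\tau)}:=D_\tau\wedge\bigwedge_i\theta_{0,i}=g_i(\xbar)$. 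Distinct $(\sigma,\tau)$ give inconsistent pieces by construction, and distinct $\alphabar$-values for $\thetabar_0$ are ruled out by the linking equations, which pin down $\thetabar_0$ from $\xbar$.

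For (B), I would induct on the construction of $\phi$. Atoms, Boolean connectives, and $\Aux$-quantifiers all preserve $\Main$-qf-ness, and $\forall x$ reduces to $\exists x$ via $\neg\exists x\neg$. For $\exists x\,\phi_0$ with $x$ a main-sort variable, the induction hypothesis gives a $\Main$-qf equivalent of $\phi_0$, to which I apply step (A) to put it as $\bigvee_j\exists\thetabar_0(\xi_j\wedge\psi_j)$ in family union form. Since $\xi_j$ is free of $x$, one commutes $\exists x$ inside to obtain $\exists x\,\phi_0\equiv\bigvee_j\exists\thetabar_0(\xi_j\wedge\exists x\,\psi_j)$, and now $\exists x\,\psi_j(x,\ybar,\thetabar_0)$ is an existential over a main-sort variable of a quantifier-free conjunction of literals---exactly the situation to which the hypothesis of the proposition applies. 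It returns a $\Main$-qf equivalent, and the whole expression is then $\Main$-qf.

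The main obstacle is step (A): the crux is arranging a normal form in which the $\Main$-dependence of $\phi$ is confined to a conjunction of literals, so that the hypothesis---which eliminates a single $\exists x$ only from quantifier-free matrices---can be brought to bear. This isolation relies essentially on the cross-sort restriction in $L$: without it, $\Main$-variables might remain entangled with $\Aux$-quantifiers and resist purification. Once (A) is in place, the pairwise-inconsistency refinement and the main-quantifier elimination in (B) are both mechanical.
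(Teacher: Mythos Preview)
Your decomposition into (A) and (B) matches the paper's structure exactly: step (A) is the paper's auxiliary lemma (that any $\Main$-qf formula is already equivalent to one in family union form), and step (B) is the body of the proof of the proposition, including the manoeuvre of pushing $\exists x$ past the $\xi_j$'s to reach a quantifier-free $\psi_j$ to which the hypothesis applies.

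There is one small slip in your execution of (A): the formula $D_\tau=\bigwedge_{\tau(j)=1}B_j\wedge\bigwedge_{\tau(j)=0}\neg B_j$ is not in general a conjunction of literals, since each $B_j$ is itself a conjunction and hence each $\neg B_j$ is a disjunction; so your $\psi_{(\sigma,\tau)}=D_\tau\wedge\bigwedge_i\theta_{0,i}=g_i(\xbar)$ fails the literal-conjunction requirement in the definition of family union form. The easy fix is to refine at the level of the individual $\Main$-atoms appearing in $\phi'$ rather than at the level of the composite $B_j$'s (equivalently, arrange in your ``routine induction'' that the $B_j$'s are already complete state descriptions over the $\Main$-atoms, so that they are pairwise inconsistent and no further $\Main$-side refinement is needed). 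The paper sidesteps this by processing one occurrence of a main variable at a time: for an $\Main$-atom $a$ it writes $\phi\equiv(a\wedge\phi[\top/a])\vee(\neg a\wedge\phi[\bot/a])$ and recurses, which keeps each $\psi_i$ a conjunction of literals throughout.
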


Note that the proposition does not require us to bring $\exists x\,\psi(x, \ybar, \etabar)$
into family union form; no $\Main$-quantifiers is enough.

To be able to apply this result to ordered abelian groups,
we introduce a second language $\Lsyn$ which has the required property:
all $\Lint$-predicates connecting $\Main$ and $\Aux$ will be replaced
by some predicates on $\Main$ and some functions from $\Main$ to $\Aux$.
Let me start by explaining the idea of how this can be done; a complete proof that
$\Lsyn$ is as strong as $\Lint$ will be given in Section~\ref{subsect:L2-vs-L3}.


The $\Lint$-predicates we have to get rid of are
$x_1 \diamrel_\eta x_2 + k_\eta$ for the various $\diamond$. First consider $x_1 =_\eta x_2$.
Since for fixed $x_1$ and $x_2$, $x_1 =_\eta x_2$ holds if and only if $\eta$ is
bigger than a certain bound depending only on $x_1 - x_2$,
we can replace the predicate $x_1 =_\eta x_2$ by the function from $G$ to $\Aux$ which
returns this bound. In the case $\eta \in \Auxc_p$, we already defined exactly this function:
it is the canonical map $\cane_p\colon G \surject \Auxe_p$;
for $\eta \in \Auxe_p \dcup \Auxe^+_p$, one verifies that $\cane_p$ still works.

A similar idea allows to express the predicates $x_1 \equiv_{p^r,\eta} x_2$ using
the canonical maps $\canc_{p^r}$ (for $p \in \PP$ and $r \in \NN$).
In principle, these maps go to $\Auxc_{p^r}$ which are not sorts of $\Lsyn$,
but we will see in Lemma~\ref{lem:prime-c} that $\Auxc_{p^r}$ and $\Auxc_{p}$ can be identified.

What is missing now is a way to deal with the predicates $x_1 \diamrel_\alpha x_2 + k_\alpha$ when $k \ne 0$
(for $\diamond \in \{=,\equiv_m\}$) and with $x \equiv^{[m']}_{m,\alpha} y$.
(The inequalities $<_\alpha$ are no problem.)
These predicates will essentially be replaced by their union over all $\alpha$.
We will see in Section~\ref{subsect:def1} how the $\Lint$-predicates can be reconstructed from this.

Here is the complete definition of the language $\Lsyn$:

\begin{defn}\label{defn:L2}
The language $\Lsyn$ consists of the following:
\begin{itemize}
\item
The main sort $G$ with $0$, $+$, $-$, $<$, and $\equiv_m$ (for $m \in \NN$).
\item
As in $\Lint$, the auxiliary sorts $\Auxc_p$, $\Auxe_p$ and $\Auxe^+_p$ with the
binary relations ``$\alpha \le \alpha'$\,''
on $(\Auxc_p \dcup \Auxe_p \dcup \Auxe^+_p) \times (\Auxc_{p'} \dcup \Auxe_{p'} \dcup \Auxe^+_{p'})$,
and on $\Auxc_p$ the unary predicates $\discr(\alpha)$,
$\dim_{\FF_p} (G^{[p^s]}_{\alpha} + pG)/(G^{[p^{s+1}]}_{\alpha} + pG) = \ell$, and
$\dim_{\FF_p} (G^{[p^s]}_{\alpha} + pG)/(G_{\alpha} + pG) = \ell$.
\item
For each $p \in \PP$ (and each $r \in \NN$): the canonical maps
$\canc_{p^r}\colon G \surject \Auxc_p$
and $\cane_p\colon G \surject \Auxe_p$ from Definition~\ref{defn:aux},
where $\Auxc_{p^r}$ is identified with $\Auxc_p$ using Lemma~\ref{lem:prime-c}.
\item
For each $k \in \ZZ \setminus \{0\}$: a unary predicate
``$x =_\bullet k_\bullet$'' on $G$ defined by: there exists a convex subgroup $H \subseteq G$
such that $G/H$ is discrete and the image of $x$ in $G/H$ is $k$ times the smallest
positive element of $G/H$; see Section~\ref{subsect:def1} for details,
in particular for definability.
\item
For each $m \in \NN$ and each $k \in \{1 , \dots, m - 1\}$:
a unary predicate ``$x \equiv_{m,\bullet} k_\bullet$'' on $G$ defined by:
there exists a convex subgroup $H \subseteq G$ such that $G/H$ is discrete and
the image of $x$ in $G/H$ is congruent modulo $m$ to $k$ times the minimal
positive element of $G/H$; again see Section~\ref{subsect:def1} for details.
\item
For each $p \in \PP$ and each $r,s \in \NN$ with $s \ge r$: a unary predicate
$D^{[p^s]}_{p^r}(x)$
on $G$ for:
there exists an $\alpha \in \Auxc_p$ such that
$x$ lies in $G^{[p^s]}_{\alpha} + p^rG$, but not in $G_\alpha + p^rG$.
\end{itemize}
\end{defn}

In this language, relative quantifier elimination will simply be the conclusion
of Proposition~\ref{prop:no-groups}:

\begin{theorem}\label{thm:qe-form}
In the theory of ordered abelian groups,
each $\Lsyn$-formula is equivalent to an $\Lsyn$-formula in family union form.
\end{theorem}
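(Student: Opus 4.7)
The plan is to derive Theorem~\ref{thm:qe-form} as a direct application of Proposition~\ref{prop:no-groups} to $L = \Lsyn$, with the partition $\Main = \{G\}$ and $\Aux = \{\Auxc_p, \Auxe_p, \Auxe^+_p \mid p \in \PP\}$, and with $T$ the theory of oags. Two things need checking: the syntactic hypothesis on the language, and that every formula of the shape $\exists x\,\psi(x,\ybar,\etabar)$ with $\psi$ quantifier-free is equivalent modulo $T$ to an $\Lsyn$-formula with no $\Main$-quantifier. The rest is then automatic by Proposition~\ref{prop:no-groups}.

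The syntactic hypothesis is immediate from inspection of Definition~\ref{defn:L2}. The only symbols of $\Lsyn$ involving both $\Main$ and $\Aux$ are the canonical maps $\canc_{p^r}\colon G \surject \Auxc_p$ and $\cane_p\colon G \surject \Auxe_p$, all of which are functions from the main sort into auxiliary sorts. Every other symbol either lives purely on $G$ (the group operations, $<$, $\equiv_m$, the predicates $x =_\bullet k_\bullet$, $x \equiv_{m,\bullet} k_\bullet$, and $D^{[p^s]}_{p^r}$) or purely on the auxiliary sorts (the preorder $\le$, $\discr$, and the two families of dimension predicates). In particular, there is no relation symbol linking $\Main$ to $\Aux$, as required.

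For the single-quantifier reduction I would route through $\Lint$. Given $\exists x\,\psi(x,\ybar,\etabar)$ with $\psi$ quantifier-free in $\Lsyn$, I would first use the translation between $\Lsyn$ and $\Lint$ from Section~\ref{subsect:L2-vs-L3} to rewrite $\psi$ as a $\Main$-qf $\Lint$-formula $\psi'(x,\ybar,\etabar')$, where $\etabar'$ extends $\etabar$ by finitely many new auxiliary variables obtained by evaluating $\canc_{p^r}$ and $\cane_p$ on the main-sort terms that occur (and by writing out predicates like $=_\bullet k_\bullet$, $\equiv_{m,\bullet} k_\bullet$, $D^{[p^s]}_{p^r}$ in $\Lint$). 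Then I would apply the elimination of a single main-sort existential quantifier for $\Main$-qf $\Lint$-formulas, carried out in Section~\ref{sect:proofs}, to obtain an $\Main$-qf $\Lint$-formula equivalent to $\exists x\,\psi'$. Finally, translating back via Section~\ref{subsect:L2-vs-L3} (which preserves $\Main$-qf-ness) gives an $\Main$-qf $\Lsyn$-formula equivalent to $\exists x\,\psi$. This is exactly the hypothesis that Proposition~\ref{prop:no-groups} needs, and invoking it produces the desired family union form.

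The main obstacle is therefore not Theorem~\ref{thm:qe-form} itself, which is essentially a packaging statement once everything is in place, but the two ingredients it relies on: establishing a faithful, $\Main$-qf preserving translation between $\Lsyn$ and $\Lint$, and then performing the actual one-quantifier elimination in $\Lint$. The latter is the genuinely hard step, because it must simultaneously handle the three families of auxiliary sorts $\Auxc_p, \Auxe_p, \Auxe^+_p$ parametrizing convex subgroups in $\Loag$-definable families, and cope with both congruence relations $\equiv_{m,\alpha}$ and the more delicate $\equiv^{[m']}_{m,\alpha}$ appearing in the literals of $\psi'$; by contrast, the present theorem reduces it to an almost mechanical consequence of Proposition~\ref{prop:no-groups}.
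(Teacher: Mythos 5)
Your outline matches the paper's strategy exactly: apply Proposition~\ref{prop:no-groups} to $\Lsyn$, verify the syntactic hypothesis (no relation symbols crossing $\Main$ and $\Aux$, only the functions $\canc_{p^r}$ and $\cane_p$), and reduce to eliminating a single main-sort existential from a quantifier-free $\Lsyn$-formula by routing through $\Lint$ via Propositions~\ref{prop:L2-to-L3} and~\ref{prop:L3-to-L2}. The hypothesis check and the back-and-forth translation are handled correctly (with the minor caveat that passing $\psi$ through Proposition~\ref{prop:L2-to-L3} produces a formula in family union form, so the new auxiliary variables are bound by an $\exists\thetabar$, not free; one then pulls $\exists x$ past $\exists\thetabar$ as the paper does).

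However, the proposal has a substantive gap: you defer the step of eliminating $\exists x$ from a conjunction of $\Lint$-literals to a citation of ``Section~\ref{sect:proofs}'', but in the paper that elimination \emph{is} the proof of this theorem---it occupies essentially all of Subsection~3.3. Nothing in your proposal explains how to handle a formula $\exists x\,\bigwedge_i r_i x \mathrel{(\diamond_i)}_{\eta_i} y_i + k_{\eta_i}$: the reduction of inequalities (dealing with lower/upper bounds relative to different $\alpha$, the choice of the auxiliary element $\gamma$, the case split on $c'_* - c_*$), nor the treatment of congruences (induction on the prime power $p^r$, the enlargement argument, Lemma~\ref{lem:sat} and Lemma~\ref{lem:power-sum} to make the condition on $\phi_0(G)+p^{r-1}G$ definable). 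As a self-contained proof proposal this omits precisely the part of the argument that carries the mathematical content, and you acknowledge as much in your final paragraph; what is presented is the correct scaffolding around the missing core.
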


We will deduce Theorem~\ref{thm:qe-int} from this one by translating the $\Main$-qf $\Lsyn$-formula back into $\Lint$. This will be done at the end of
Section~\ref{subsect:L2-vs-L3}.

\subsection{Comparison to Gurevich and Schmitt}

Theorem~\ref{thm:qe-form} is very similar to the quantifier elimination results of Gurevich and Schmitt;
here we give a little translation table between our language $\Lsyn$ and the one
used in Schmitt's habilitation thesis \cite{Sch.habil}. The quantifier
elimination result of \cite{Sch.habil} (Lemma~4.3, Theorem~4.4) is also described in the introduction
of \cite{Sch.oAGmodComp} (Theorem~1.7).

Schmitt does not distinguish between the sorts
$\Auxc_n$, $\Auxe_n$, and $\Auxe_n^+$; instead, for each $n \in \NN$ he works with a single
sort $\Sp_n(G) := (\Auxc_n \dcup \Auxe_n \dcup \Auxe_n^+)/\mathord{\asymp}$ (the ``$n$-spine of $G$''),
with predicates for $\Auxc_n$ and $\Auxe_n$. (More precisely, Schmitt does not really use a
multi-sorted structure, but this is what his formulation boils down to.)

When eliminating the $\Main$-quantifiers of a given formula $\phi$, instead of
using several sorts $\Sp_p(G)$ for primes $p$,
he uses only one single sort $\Sp_n(G)$ for $n \in \NN$.

Instead of our dimension predicates, Schmitt has
predicates for the \emph{Szmielew-invariants} of
$G^{[p^r]}_{p^r,\alpha}/G_{p^r,\alpha}$ (see Definition on page~5 of \cite{Sch.habil}).
At first sight, it seems that the number of Szmielew-invariants is bigger than
the number dimensions for which we introduced predicates (for each $\alpha$, the
set of Szmielew-invariants is parametrized by two natural numbers, whereas we consider only
two families of dimensions parametrized by a single natural number), but a little
computation shows that many of the Szmielew-invariants are always equal
(and equal to our dimensions).

Finally, on the main sort, Schmitt has slightly different predicates than
our $x =_\bullet k_\bullet$, $x \equiv_{m,\bullet} k$ and $D^{[p^s]}_{p^r}$.

\section{Details of the languages}
\label{sect:details}

\subsection{The families of convex definable subgroups $G_\alpha$}

In Definition~\ref{defn:aux}, we introduced the families of convex groups
$G_\alpha$, but we still had to verify that they are definable in the case
$\alpha \in \Auxc_n$.

\begin{lem}\label{lem:def-Hng}
Fix $n \in \NN$.
For $a \in G$, the group $G_{\canc_n(a)}$ is definable uniformly in $a$.
\end{lem}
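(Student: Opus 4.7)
The plan is to write down an explicit $\Loag$-formula $\phi(x,a)$ defining $G_{\canc_n(a)} = H_a$ uniformly in $a$. The case $a \in nG$, where $H_a = \{0\}$ by definition, is trivial, so the focus is on $a \notin nG$. There, $H_a$ is the largest convex subgroup with $a \notin H_a + nG$. Since the convex subgroups of $G$ are linearly ordered and $x \in H_a$ iff $\langle x\rangle\conv \subseteq H_a$, the maximality of $H_a$ gives
\[
x \in H_a \iff a \notin \langle x\rangle\conv + nG.
\]
The right-hand side is not obviously first-order, because ``$y \in \langle x\rangle\conv$'' unfolds to ``there exists a standard $k\in\NN$ with $|y|\leq k|x|$''. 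So the heart of the argument is to eliminate $\langle x\rangle\conv$ from this expression.

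The key claim will be
\[
a \in \langle x\rangle\conv + nG \iff \exists y\,\big(|y|\leq n|x| \wedge a \equiv_n y\big).
\]
The direction ``$\Leftarrow$'' is immediate: $n|x| \in \langle x\rangle\conv$, so by convexity every $y$ with $|y|\leq n|x|$ lies in $\langle x\rangle\conv$. For ``$\Rightarrow$'', starting from $a = y + nz$ with $|y|\leq k|x|$ for some standard $k\in\NN$, I would iteratively replace $y$ by $y \mp n|x|$ and $z$ by $z \pm |x|$ (choosing signs opposite to that of $y$). Each step preserves the identity $a = y + nz$ because $n|x| \in nG$, and strictly reduces $|y|$ by $n|x|$ whenever $|y|\geq n|x|$; after at most $k$ iterations (a \emph{standard} natural number) one arrives at a $y'$ with $|y'|\leq n|x|$ that still satisfies $y' \equiv_n y$, proving the claim.

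Combining these observations yields the uniform $\Loag$-definition
\[
\phi(x,a) \;:=\; (a \in nG \wedge x = 0) \ \vee \ \forall y\,\big(|y|\leq n|x| \to a - y \notin nG\big),
\]
where $u \in nG$ abbreviates $\exists v\,(u = nv)$ and $|y|\leq n|x|$ abbreviates $-n|x|\leq y\leq n|x|$. The only delicate point is the forward direction of the key claim: one must pass from the countable disjunction ``$|y|\leq k|x|$ for some standard $k\in\NN$'' to a single quantifier-bounded inequality ``$|y'|\leq n|x|$'' within the same coset of $nG$. The fact that the bound is $n|x|$ (and not $|x|$) is precisely what makes the iterative reduction by $\pm n|x|$ preserve membership in $a + nG$, and that is the whole content of the proof.
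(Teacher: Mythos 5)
Your proof is correct and takes essentially the same route as the paper: the key observation in both is that, although $\langle x\rangle\conv$ itself is not definable, the set $\langle x\rangle\conv + nG$ coincides with $[-n|x|,n|x|] + nG$ (the paper writes $[0,n|x|]+nG$, which is the same modulo $nG$), and hence is definable. You supply the iterative ``reduce $y$ by $\pm n|x|$'' argument that the paper leaves implicit in asserting that identity; otherwise the approach and the resulting defining formula are identical.
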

\begin{proof}
We may suppose $a \notin nG$. In that case,
$G_{\canc_n(a)}$ consists of those elements $b \in G$ such that
$a \notin \langle b \rangle\conv + nG$. The group $\langle b \rangle\conv$ is
not definable in general, but we have
$\langle b \rangle\conv + nG = [0, n |b|] + nG$, which is definable;
here, $|b|$ denotes the absolute value of $b$.
\end{proof}

We defined the sorts $\Auxc_n$, $\Auxe_n$ and $\Auxe^+_n$ for arbitrary $n$,
but in our languages, we only have them for $n$ prime.
The following two lemmas will allow us to reduce any usage of these sorts
to the prime cases. In particular, we show that $\Auxc_{p^r}$ can be identified
with $\Auxc_{p}$, as required in the definition of $\Lsyn$.

We use the notation ``$p^r\mid\mid n$'' from number theory which means
that $p$ is a prime divisor of $n$ and that $p^r$ is the maximal power of $p$ dividing $n$.

\begin{lem}\label{lem:prime-c}
Let $n \in \NN$.
\begin{enumerate}
\item
We have the following equality of sets of
convex subgroups of $G$:
\[
\{G_{\alpha} \mid \alpha \in \Auxc_n\}
\,=
\bigcup_{p \in \PP, p \mid n}\{G_{\alpha} \mid \alpha \in \Auxc_p\}
.
\]
In particular, there is a (unique, definable) bijection $\Auxc_{p^r} \to \Auxc_{p}$
which is compatible with $\alpha \mapsto G_\alpha$.
\item
For any $a \in G$, we have
\[
G_{\canc_n(a)} = \bigcup_{p^r\mid\mid n} G_{\canc_{p^r}(a)}
.
\]
In particular, $\canc_{n}(a) \asymp \max_{p^r\mid\mid n}\canc_{p^r}(a)$.
\end{enumerate}
\end{lem}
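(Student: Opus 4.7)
The plan is to prove~(2) first by a direct computation, and then to deduce~(1) from~(2) together with an identification of $\Auxc_{p^r}$ and $\Auxc_p$ on the level of their associated convex subgroups.

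For~(2), the key input is a Bezout/Chinese-Remainder observation: in any abelian group $A$, an element $x$ lies in $nA$ iff $x\in p^rA$ for every $p^r\mid\mid n$ (the nontrivial direction follows from $1 = up^r + vm$ for coprime $p^r,m$ together with induction on the number of prime divisors of $n$). Applied inside $G/H$ for each convex subgroup $H\subseteq G$, this gives
\[
a \notin H + nG \iff \exists\, p^r\mid\mid n \colon\ a \notin H + p^r G.
\]
Since the convex subgroups of $G$ are totally ordered by inclusion, the largest convex $H$ satisfying the right-hand side is simply the union (equivalently the maximum) of the finitely many candidates $G_{\canc_{p^r}(a)}$, one for each $p^r\mid\mid n$. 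That is exactly~(2), and the ``in particular'' statement of~(2) follows at once.

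The inclusion $\subseteq$ in~(1) is then automatic from~(2), since every $G_{\canc_n(a)}$ coincides with some $G_{\canc_{p^r}(a)}$ for $p^r\mid\mid n$. Both the other inclusion and the ``in particular'' bijection claim of~(1) reduce to showing that for each prime $p$ and each $r\ge 1$,
\[
\{G_\alpha\mid\alpha\in\Auxc_{p^r}\} = \{G_\alpha\mid\alpha\in\Auxc_p\}.
\]
For the direction $\supseteq$, given $b\in G$ I would set $a:=p^{r-1}b$ and verify the equivalence $a\notin H+p^rG\iff b\notin H+pG$ for every convex $H$; one direction is immediate ($b=h+pc$ yields $a=p^{r-1}h+p^rc$), and the other uses torsion-freeness of the quotient oag $G/H$ to cancel $p^{r-1}$ in the relation $p^{r-1}\bar b = p^r\bar c$. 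For $\subseteq$, given $a\notin p^rG$ and $H:=G_{\canc_{p^r}(a)}$, I would pick the largest $k\le r-1$ with $a\in p^kG+H$ (such a $k$ exists and is at most $r-1$ by the very definition of $H$), write $a = p^kb + h$ with $h\in H$, and show $G_{\canc_p(b)} = H$.

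The main obstacle is this last equality. The easy inclusion $H\subseteq G_{\canc_p(b)}$ follows from maximality of $k$: any decomposition $b = h' + pc$ with $h'\in H$ would give $a = (p^kh'+h) + p^{k+1}c \in p^{k+1}G + H$, contradicting the choice of $k$. The harder inclusion $G_{\canc_p(b)}\subseteq H$ uses the maximality of $H$: any strictly larger convex $H'\supsetneq H$ with $b\notin H'+pG$ would, by maximality of $H$, satisfy $a\in H'+p^rG$, hence $p^kb\in H'+p^rG$, and cancelling the factor $p^k$ in the torsion-free quotient oag $G/H'$ (permitted precisely because $k\le r-1$) would yield $b\in H'+pG$, a contradiction. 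Once both inclusions are in place, uniqueness and definability of the bijection $\Auxc_{p^r}\to\Auxc_p$ are immediate from the quotient description of $\Auxc_n$ as $G/\mathord{\sim}$ and from Lemma~\ref{lem:def-Hng}.
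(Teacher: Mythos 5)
Your plan follows essentially the paper's approach: part~(2) comes from the Bezout/CRT identity $nA = \bigcap_{p^r\mid\mid n} p^rA$ applied in $G/H$, and the harder $\subseteq$ half of the $\Auxc_{p^r}$--$\Auxc_p$ identification is the same decomposition $a = p^kb + h$ with torsion-free cancellation that the paper uses. You reorder the steps (proving~(2) first, where the paper opens with the $\supseteq$ half of~(1)), but the key ideas coincide.

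One step is stated imprecisely. You claim that the $\supseteq$ inclusion of~(1) reduces to the equality $\{G_\alpha \mid \alpha \in \Auxc_{p^r}\} = \{G_\alpha \mid \alpha \in \Auxc_p\}$; but after applying that equality you still need $\{G_\alpha \mid \alpha \in \Auxc_{p^r}\} \subseteq \{G_\alpha \mid \alpha \in \Auxc_n\}$, and that does not follow from the identification together with~(2) alone. The paper obtains it from the more general inclusion $\{G_\alpha \mid \alpha \in \Auxc_m\} \subseteq \{G_\alpha \mid \alpha \in \Auxc_n\}$ for every $m \mid n$, proved via the substitution $a' := \frac nm a$ and exactly the torsion-free cancellation you already invoke. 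Your verification that $a := p^{r-1}b$ gives $a \notin H + p^rG \iff b \notin H + pG$ is this same argument for the divisor pair $(m,n) = (p,p^r)$; you need to run the substitution once more for the pair $(p^r, n)$ (or $(p, n)$) to actually land in $\{G_\alpha \mid \alpha \in \Auxc_n\}$. Since that tool is already in your kit, this is a gap in the stated reduction rather than a missing idea.
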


\begin{proof}
We start with (1)~``$\supseteq$''; more precisely,
for $m \mid n$, we prove
$\{G_\alpha \mid \alpha \in \Auxc_n\} \supseteq
\{G_\alpha \mid \alpha \in \Auxc_m\}$.
Consider $G_\alpha \ne \{0\}$ in the right hand set and choose $a \in G \setminus mG$ with $\alpha = \canc_m(a)$.
Recall that $G_\alpha$ is the largest convex subgroup of $G$
with $a \notin G_\alpha + mG$. For any convex subgroup $H \in G$,
we have $a \in H + mG$ if and only if $a' := \frac nm a \in H + nG$; hence
$G_\alpha = G_{\canc_n(a')}$.

Next, we prove (2).
The inclusion ``$\supseteq$'' is clear. For ``$\subseteq$'',
we may suppose that $a \in G \setminus nG$. By the Chinese
remainder theorem, we have
$G_{\canc_n(a)} + nG = \bigcap_{p^r \mid\mid n} (G_{\canc_n(a)} + p^rG)$,
so $a \notin G_{\canc_n(a)} + nG$ implies $a \notin G_{\canc_n(a)} + p^rG$
for some $p \mid n$. This in turn implies
$G_{\canc_n(a)} \subseteq G_{\canc_{p^r}(a)}$.

Finally, we prove (1)~``$\subseteq$''.
By (2), we have $\{G_\alpha \mid \alpha \in \Auxc_n\} \subseteq \{G_\alpha \mid \alpha \in \bigcup_{p^r \mid\mid n}\Auxc_{p^r}\}$, so it suffices to do the case where
$n = p^r$.
Suppose that $\alpha = \canc_{p^r}(a)$ for some $a \in G \setminus p^rG$ and consider the group $G_{\alpha}$ from the left hand set of (1).
Let $s \in \NN$ be maximal with
$a \in G_{\alpha} + p^sG$; by assumption $s < r$.
Write $a = b + p^sa'$ for $b \in G_{\alpha}$ and $a'\in G$.
Then $a' \notin G_{\alpha} + pG$, since otherwise
$b + p^sa' \in G_{\alpha} + p^{s+1}G$.
On the other hand for any convex subgroup $H$ strictly larger
than $G_{\alpha}$, we have $b + p^sa' = a \in H + p^rG \subseteq H + p^{s+1}G$,
so $p^sa' \in H + p^{s+1}G$, so $a' \in H + pG$. Hence
$G_{\canc_p(a')} = G_{\alpha}$.
\end{proof}

\begin{lem}\label{lem:prime-e}

For any $n \in \NN$ and any $a \in G$, we have
\[
G_{\cane_n(a)} = \bigcup_{p \in \PP, p \mid n} G_{\cane_{p}(a)}
\quad\text{and}\quad
G_{\cane_n(a)+} = \bigcap_{p \in \PP, p \mid n} G_{\cane_{p}(a)+}
.
\]
In particular, $\cane_{n}(a) \asymp \max_{p \in \PP, p\mid n}\cane_{p}(a)$
and $\cane_{n}(a)+ \asymp \min_{p \in \PP, p\mid n}(\cane_{p}(a)+)$.
\end{lem}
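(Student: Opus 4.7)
The plan is to unpack the definitions of $G_{\cane_n(a)}$ and $G_{\cane_n(a)+}$ and then immediately apply Lemma~\ref{lem:prime-c}(1), which says that the family of subgroups indexed by $\Auxc_n$ is the union of the families indexed by $\Auxc_p$ for primes $p \mid n$. This will convert a single union/intersection over $\Auxc_n$ into a nested union/intersection over $p \mid n$ and $\alpha \in \Auxc_p$, and the inner sums will be recognizable as $G_{\cane_p(a)}$ or $G_{\cane_p(a)+}$.

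For the first equality, I would simply write
\[
G_{\cane_n(a)}
\;=\!\!\!\!\!\bigcup_{\substack{\alpha \in \Auxc_n \\ a \notin G_\alpha}}\!\!\!\! G_\alpha
\;=\;\bigcup_{\substack{p \in \PP \\ p \mid n}}\;\bigcup_{\substack{\alpha \in \Auxc_p \\ a \notin G_\alpha}}\!\!\!\! G_\alpha
\;=\;\bigcup_{\substack{p \in \PP \\ p \mid n}} G_{\cane_p(a)},
\]
using Lemma~\ref{lem:prime-c}(1) in the middle step. For the second equality, the cleanest route is to split into two cases. If $a \ne 0$, Remark~\ref{rem:def-e+} rewrites $G_{\cane_n(a)+}$ as $\bigcap_{\alpha \in \Auxc_n,\,a \in G_\alpha} G_\alpha$, and the same nested-indexing trick via Lemma~\ref{lem:prime-c}(1) yields $\bigcap_{p \mid n} G_{\cane_p(a)+}$. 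If $a = 0$, one uses the original definition directly: $G_{\cane_n(0)} = \{0\}$ and likewise $G_{\cane_p(0)} = \{0\}$, so the conditions $G_\alpha \supsetneq G_{\cane_n(0)}$ and $G_\alpha \supsetneq G_{\cane_p(0)}$ both just mean $G_\alpha \ne \{0\}$, and Lemma~\ref{lem:prime-c}(1) again gives the desired equality of intersections.

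For the two ``In particular'' clauses, the key observation is that the convex subgroups of $G$ are totally ordered by inclusion. Hence any finite (indeed any) family $\{G_{\cane_p(a)}\}_{p \mid n}$ of convex subgroups is a chain, so its union coincides with its (unique) maximum element and its intersection coincides with its minimum element. This converts the set-theoretic equalities into the statements $\cane_n(a) \asymp \max_{p \mid n} \cane_p(a)$ and $\cane_n(a)+ \asymp \min_{p \mid n}(\cane_p(a)+)$.

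I do not expect any real obstacle: the whole argument is a direct unpacking once Lemma~\ref{lem:prime-c}(1) is in hand. The only mildly delicate point is the $a=0$ case in the second equality, where Remark~\ref{rem:def-e+} does not directly apply and one must revert to the defining intersection; I would flag this explicitly so the reader sees why the calculation still goes through.
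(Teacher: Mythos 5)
Your proof is correct and takes essentially the same route as the paper's (very terse) one-line proof, which likewise reduces everything to Lemma~\ref{lem:prime-c}(1) together with Remark~\ref{rem:def-e+} for the ``$+$'' case. The one place you go beyond the paper's proof — explicitly handling $a = 0$ by falling back on the original defining intersection, since Remark~\ref{rem:def-e+} is stated only for $b \ne 0$ — is a welcome bit of care rather than a different approach.
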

\begin{proof}
This follows directly from Lemma~\ref{lem:prime-c}~(1), where
for $G_{\cane_n(a)+}$ we use Remark~\ref{rem:def-e+}.
\end{proof}

\subsection{Congruence conditions and expressing $\canc_n$ and $\cane_n$ in $\Lint$}
\label{subsect:cong*}


In Definition~\ref{defn:cong*}, we introduced the group
$G^{[n]}_{\alpha} = \bigcap_{H \supsetneq G_{\alpha}} (H + nG)$
for $n \in \NN$ and $\alpha \in \Aux$. The point is that $G^{[n]}_{\alpha}$
might be strictly bigger than $(\bigcap_{H \supsetneq G_{\alpha}} H) + nG$,
and in general, it is not of the form $H_0 + nG$ for any convex subgroup $H_0 \subseteq G$
(see example in Section~\ref{subsect:ex[n]}).
We will need these groups to
express the $\Lsyn$-function $\canc_n$ in $\Lint$ without $\Main$ quantifiers;
this will be done at the end of this section.

The following lemma gives an equivalent definition of $G^{[n]}_{\alpha}$
(using not all convex subgroups of $G$) which in particular shows that
it is definable.

\begin{lem}\label{lem:cong*}
Let $n \in \NN$.
\begin{enumerate}
\item
For any convex subgroup $H \subseteq G$, we have
\[
H + nG = \bigcap_{\substack{\alpha' \in \Auxc_n\\G_{\alpha'} \supseteq H}} (G_{\alpha'} + nG)
.
\]
\item
For $\alpha \in \Aux$, we have
\[
G^{[n]}_{\alpha} = \bigcap_{\substack{\alpha' \in \Auxc_n\\\alpha' > \alpha}} (G_{\alpha'} + nG)
.
\]
\end{enumerate}
\end{lem}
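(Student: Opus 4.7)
My plan is to prove (1) first, and then derive (2) from (1) together with the definition of $G^{[n]}_\alpha$.

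For part (1), the inclusion ``$\subseteq$'' is immediate: every $\alpha'$ in the index set has $G_{\alpha'} \supseteq H$, so $G_{\alpha'} + nG \supseteq H + nG$. The nontrivial direction is ``$\supseteq$'', which I would prove by contrapositive. Suppose $a \notin H + nG$; I want to exhibit an $\alpha' \in \Auxc_n$ with $G_{\alpha'} \supseteq H$ and $a \notin G_{\alpha'} + nG$. The natural candidate is $\alpha' := \canc_n(a)$, since by Definition~\ref{defn:aux}~(1), $G_{\canc_n(a)}$ is by construction the \emph{largest} convex subgroup of $G$ with $a \notin G_{\canc_n(a)} + nG$. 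Because $H$ is itself a convex subgroup avoiding $a$ modulo $nG$, maximality forces $G_{\canc_n(a)} \supseteq H$, which is exactly what is required.

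For part (2), both inclusions follow by unwinding the definition $G^{[n]}_\alpha = \bigcap_{H \supsetneq G_\alpha}(H + nG)$ and applying part (1). For ``$\subseteq$'', given any $\alpha' \in \Auxc_n$ with $\alpha' > \alpha$, the group $H := G_{\alpha'}$ is a convex subgroup strictly containing $G_\alpha$ (by the definition of the preorder in Definition~\ref{defn:aux}~(4)), so $a \in G^{[n]}_\alpha$ directly gives $a \in G_{\alpha'} + nG$. For ``$\supseteq$'', fix a convex $H \supsetneq G_\alpha$; by (1), $H + nG$ is the intersection of $G_{\alpha''} + nG$ over all $\alpha'' \in \Auxc_n$ with $G_{\alpha''} \supseteq H$, and every such $\alpha''$ satisfies $G_{\alpha''} \supsetneq G_\alpha$, i.e.\ $\alpha'' > \alpha$, so each such factor already appears on the right-hand side of (2). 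Hence any $a$ in that right-hand side lies in $H + nG$ for every $H \supsetneq G_\alpha$, and therefore in $G^{[n]}_\alpha$.

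The step I expect to be the main conceptual point (though not technically hard) is the use of the maximality property built into the definition of $\canc_n$ in the nontrivial direction of (1); everything else is a matter of chasing inclusions. No case distinction on whether $\alpha$ lies in $\Auxc_n$, $\Auxe_n$, or $\Auxe_n^+$ should be needed, since only the associated subgroup $G_\alpha$ enters the statement.
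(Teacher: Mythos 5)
Your proof is correct and follows essentially the same route as the paper: the key move in (1) is choosing $\alpha' = \canc_n(a)$ and invoking the built-in maximality of $G_{\canc_n(a)}$, and (2) is derived by applying (1) to each $H + nG$ in the definition of $G^{[n]}_\alpha$ and noting that the resulting $\alpha'$ all satisfy $\alpha' > \alpha$. You spell out the maximality step that the paper leaves implicit, but this is a matter of exposition rather than a different argument.
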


\begin{proof}
(1)
``$\subseteq$'' is clear, so suppose now $a \notin H + nG$.
Set $\alpha' = \canc_n(a)$. Then by definition $a \notin G_{\alpha'} + nG$.

(2)
Again, ``$\subseteq$'' is clear.
By applying (1) to the groups $H + nG$ appearing in the definition of
$G^{[n]}_{\alpha}$, we obtain that $G^{[n]}_{\alpha}$ is the intersection
of groups $G_{\alpha'} + nG$ for some $\alpha' \in \Auxc_n$.
Since $G_{\alpha} \subsetneq H \subseteq G_{\alpha'}$, these $\alpha'$ satisfy
$\alpha' > \alpha$.
\end{proof}

The relations $\equiv_{m,\alpha}$ and $\equiv^{[n]}_{m,\alpha}$ have a lot of
similar basic properties. The following three lemmas list those
which we will need; we formulate them in terms of the groups
$G_\alpha + mG$ and $G^{[n]}_\alpha + mG$.

\begin{lem}\label{lem:m-mid-n}
For $\alpha \in \Aux$ and $m,n\in \NN$, we have
\[
G^{[n]}_\alpha + mG = G^{[n]}_\alpha + \gcd(m,n)G.
\]
In particular, in $\Lint$ we only need those predicates
$\equiv^{[n]}_{m,\alpha}$ with $m \mid n$.
\end{lem}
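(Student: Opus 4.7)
The plan is to prove the equality $G^{[n]}_\alpha + mG = G^{[n]}_\alpha + \gcd(m,n)G$ by two inclusions, relying on just two simple observations: that $G^{[n]}_\alpha$ itself contains $nG$, and B\'ezout's identity for $\gcd(m,n)$.

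For the inclusion ``$\subseteq$'', I would note that $\gcd(m,n)$ divides $m$, so $mG \subseteq \gcd(m,n)G$, and the inclusion follows immediately.

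For the inclusion ``$\supseteq$'', I first observe that $nG \subseteq G^{[n]}_\alpha$: indeed, by definition $G^{[n]}_\alpha$ is an intersection of subgroups of the form $H + nG$, each of which contains $nG$. Writing $d := \gcd(m,n) = am + bn$ with $a,b \in \ZZ$ by B\'ezout, any element $g + dh$ with $g \in G^{[n]}_\alpha$ and $h \in G$ decomposes as $g + dh = (g + bnh) + amh$; the first summand lies in $G^{[n]}_\alpha$ (since $bnh \in nG \subseteq G^{[n]}_\alpha$) and the second lies in $mG$. This gives the reverse inclusion.

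For the ``in particular'' clause, I would observe that the predicate $x \equiv^{[n]}_{m,\alpha} y$ is, by definition, membership of $x - y$ in $G^{[n]}_\alpha + mG$; by the equality just proved, this is the same as $x \equiv^{[n]}_{\gcd(m,n),\alpha} y$, and $\gcd(m,n) \mid n$. Thus predicates with $m \nmid n$ are redundant. I do not expect any serious obstacle: both inclusions are one-line calculations, and the only minor subtlety is noticing that $nG$ sits inside $G^{[n]}_\alpha$, which is already visible from the defining intersection.
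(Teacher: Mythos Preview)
Your proof is correct and follows exactly the same approach as the paper: the key observation is that $nG \subseteq G^{[n]}_\alpha$, from which the nontrivial inclusion follows since then $G^{[n]}_\alpha + mG \supseteq nG + mG = \gcd(m,n)G$. The paper states this in a single line, taking the identity $nG + mG = \gcd(m,n)G$ for granted, whereas you spell out the B\'ezout decomposition explicitly; the arguments are otherwise identical.
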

\begin{proof}
Since $nG \subseteq G^{[n]}_\alpha$, the left hand side contains
$nG + mG = \gcd(m,n)G$.
\end{proof}

\begin{lem}\label{lem:cong-mult}
For $k\in\ZZ$, $m, n \in \NN$, and $\alpha \in \Aux$, we have:
\[
\begin{aligned}
k(G_{\alpha} + mG) &= kG \cap (G_{\alpha} + kmG)\\
k(G^{[n]}_{\alpha} + mG) &= kG \cap (G^{[kn]}_{\alpha} + kmG)
\end{aligned}
\]
\end{lem}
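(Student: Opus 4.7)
\medskip

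The plan is to reduce both identities to a single basic fact about convex subgroups of torsion-free ordered abelian groups: if $H \subseteq G$ is convex and $k h \in H$ for some $h \in G$ and $k \in \ZZ \setminus \{0\}$, then $h \in H$. This ``purity'' of convex subgroups follows because any nonzero $h \notin H$ with $h > 0$ must satisfy $h > h'$ for every $0 < h' \in H$ (otherwise convexity forces $h \in H$), so $|kh| \ge |h|$ lies outside $H$ as well. The case $k = 0$ in the lemma is trivial on both sides, so throughout the proof we may assume $k \ne 0$.

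\medskip

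For the first identity, the inclusion ``$\subseteq$'' is immediate: every element of $k(G_{\alpha} + mG)$ has the form $k h + km g = kh + kmg$ with $kh \in G_\alpha$ (as $G_\alpha$ is a subgroup) and $kmg \in kmG$, and it clearly lies in $kG$. Conversely, given $x = kx' \in kG$ with $x = h + kmg_0$ for some $h \in G_\alpha$, we rewrite $h = k(x' - mg_0)$, which lies in $kG \cap G_\alpha$; by purity, $h = kh'$ for some $h' \in G_\alpha$, and torsion-freeness of $G$ gives $x' = h' + mg_0 \in G_\alpha + mG$, so $x = kx' \in k(G_\alpha + mG)$.

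\medskip

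For the second identity, I would work one convex subgroup at a time, using Lemma~\ref{lem:cong*}(2) (or directly the definition of $G^{[n]}_\alpha$) to reduce to statements about groups of the form $H + nG$ with $H \supsetneq G_\alpha$ convex. The inclusion ``$\subseteq$'' is again straightforward: for $y \in G^{[n]}_\alpha$ and each such $H$ we have $y \in H + nG$, hence $ky \in H + knG$, so $ky \in G^{[kn]}_\alpha$; then $k(y + mg) = ky + kmg \in G^{[kn]}_\alpha + kmG$. For ``$\supseteq$'', start from $x = kx' = z + kmg_0$ with $z \in G^{[kn]}_\alpha$, and set $y := x' - mg_0$, so $ky = z$. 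For each convex $H \supsetneq G_\alpha$, write $ky = h + kng''$ with $h \in H$; then $h = k(y - ng'') \in kG \cap H$, purity gives $h = kh'$ with $h' \in H$, and torsion-freeness yields $y = h' + ng'' \in H + nG$. Taking the intersection over all such $H$, we conclude $y \in G^{[n]}_\alpha$, so $x = k(y + mg_0) \in k(G^{[n]}_\alpha + mG)$.

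\medskip

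The only real subtlety is in the second identity, where $G^{[n]}_\alpha$ is not itself a convex subgroup and need not have the form $H_0 + nG$; this is why the purity argument has to be applied inside each $H \supsetneq G_\alpha$ separately before intersecting. Everything else is bookkeeping with torsion-freeness, and no further structural properties of the sort $\Aux$ are needed.
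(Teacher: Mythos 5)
Your proof is correct and follows exactly the route the paper gestures at in its one-line proof: the first identity comes from $kG_\alpha = kG \cap G_\alpha$ (your purity fact, restated), and the second is obtained by applying the same argument inside each convex $H \supsetneq G_\alpha$ before intersecting, using the definition of $G^{[n]}_\alpha$. No substantive differences.
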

\begin{proof}
Straight forward, using
that the convexity of $G_\alpha$ implies $kG_\alpha = kG \cap G_\alpha$
and using the definition of $G^{[n]}_\alpha$.
\end{proof}

\begin{lem}\label{lem:cong-factorize}
Suppose that $m=m_1\cdot m_2, n = n_1\cdot n_2 \in \NN$ with $m_1,m_2$ coprime and
$n_1,n_2$ coprime, and suppose that $\alpha \in \Aux$. Then we have:
\[
\begin{aligned}
G_{\alpha} + mG &= (G_{\alpha} + m_1G) \cap (G_{\alpha} + m_2G)\\
G^{[n]}_{\alpha} + mG &= (G^{[n]}_{\alpha} + m_1G) \cap (G^{[n]}_{\alpha} + m_2G)\\
G^{[n]}_{\alpha} + mG &= (G^{[n_1]}_{\alpha} + mG) \cap (G^{[n_2]}_{\alpha} + mG)
\end{aligned}
\]
\end{lem}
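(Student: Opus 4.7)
The plan is to isolate one elementary Chinese-Remainder-style fact and apply it in three different ways. The basic fact is: for any abelian group $A$, any subgroup $K \subseteq A$, and any coprime $m_1, m_2 \in \NN$ with $m = m_1 m_2$, one has $(K + m_1 A) \cap (K + m_2 A) = K + mA$. After passing to $B := A/K$ this reduces to $m_1 B \cap m_2 B = mB$, which is immediate from Bezout: if $u m_1 + v m_2 = 1$ and $\bar a = m_1 \bar b_1 = m_2 \bar b_2$ in $B$, then
\[
\bar a = u m_1 \bar a + v m_2 \bar a = u m_1 m_2 \bar b_2 + v m_2 m_1 \bar b_1 = m(u\bar b_2 + v \bar b_1) \in mB.
\]

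Statements (1) and (2) of the lemma are then direct instances of this fact, obtained with $A = G$ and $K = G_\alpha$ for (1) and $K = G^{[n]}_\alpha$ for (2).

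Statement (3) is the part that needs a little extra care, since the two subgroups $G^{[n_1]}_\alpha$ and $G^{[n_2]}_\alpha$ are a priori different and the basic fact cannot be applied directly. The inclusion $\subseteq$ is immediate, because $nG \subseteq n_iG$ forces $G^{[n]}_\alpha \subseteq G^{[n_i]}_\alpha$ termwise in the defining intersections. For $\supseteq$, take $a$ in the right-hand intersection and write $a \equiv b_i \pmod{mG}$ with $b_i \in G^{[n_i]}_\alpha$ for $i=1,2$. Choose $u,v \in \ZZ$ with $u n_1 + v n_2 = 1$ and set $b := u n_1 b_2 + v n_2 b_1$. A short computation yields $b_1 - b = u n_1(b_1 - b_2) \in mG$, so $b \equiv a \pmod{mG}$. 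For every convex $H \supsetneq G_\alpha$, the inclusions $b_1 \in H + n_1 G$ and $b_2 \in H + n_2 G$ show that both $v n_2 b_1$ and $u n_1 b_2$ lie in $H + nG$, hence $b \in H + nG$; intersecting over all such $H$ gives $b \in G^{[n]}_\alpha$, and therefore $a \in G^{[n]}_\alpha + mG$.

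The only genuine obstacle is spotting the correct CRT combination $b = u n_1 b_2 + v n_2 b_1$ in (3); once it is written down, both checks -- the congruence $a \equiv b \pmod{mG}$ and the containment $b \in H + nG$ for each convex $H \supsetneq G_\alpha$ -- reduce to one-line manipulations.
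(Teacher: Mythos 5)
Your proof is correct and follows essentially the same route as the paper's: parts (1) and (2) are the Chinese remainder theorem (equivalently, Bezout) after passing to the quotient, and part (3) is the same Bezout combination $u n_1 b_2 + v n_2 b_1$ that the paper uses, merely packaged as the construction of a single witness $b \in G^{[n]}_\alpha$ with $b \equiv a \pmod{mG}$ rather than as one long displayed expansion of $a$.
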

\begin{proof}
The first two equations 
are simply the Chinese remainder theorem in the groups $G/G_\alpha$
and $G/G^{[n]}_\alpha$, respectively.
The third one also follows directly from the Chinese remainder theorem,
but since this is slightly more subtle, let us write down the details.
``$\subseteq$'' is clear. For ``$\supseteq$'',
suppose that $a$ is an element of the right hand side,
i.e., there are elements $b_i \in mG$,  $c_{\alpha',i} \in G_{\alpha'}$
and $d_{\alpha',i} \in G$ such that for $i=1,2$ and for all $\alpha' > \alpha$ we have
\[
a = b_i + c_{\alpha',i} + n_id_{\alpha',i}
.
\]
Find $x_1,x_2 \in \ZZ$ with $x_1n_1 + x_2n_2 = 1$.
Then
\[
\begin{split}
a &=
x_1n_1(b_2 + c_{\alpha',2} + n_2d_{\alpha',2}) +
x_2n_2(b_1 + c_{\alpha',1} + n_1d_{\alpha',1})\\
&= \underbrace{x_1n_1b_2 + x_2n_2b_1}_{\in\,mG} +
\underbrace{x_1n_1c_{\alpha',2} + x_2n_2c_{\alpha',1}}_{\in\,G_{\alpha'}} + \underbrace{n_1n_2(x_1d_{\alpha',2} + x_2d_{\alpha',1})}_{\in\,nG}
,
\end{split}
\]
i.e., $a \in G^{[n]}_{\alpha} + mG$.
\end{proof}

Let us end this section by relating the $\Lsyn$-maps $\canc_n$ and $\cane_n$ with
the $\Lint$-predicates $\equiv_{n,\alpha}$ and $=_{\alpha}$.

\begin{lem}\label{lem:can-eq}
For $n \in \NN$, $a \in G$, $\alpha \in \Aux$ and $\beta \in \Auxc_n \cup \Auxe_n$, we have the following equivalences,
where for $\overset{\textup{(1)}}{\Longrightarrow}$, we additionally need
$a \notin nG$, and for
$\overset{\textup{(3)}}{\Longrightarrow}$, we additionally need
$a \ne 0$.
\begin{align*}
\canc_n(a) \ge \alpha &\overset{\textup{(1)}}{\ciff} a \not\equiv_{n,\alpha} 0
&
\cane_n(a) \ge \beta &\overset{\textup{(3)}}{\ciff} a \ne_{\beta} 0
\\
\canc_n(a) \le \alpha &\overset{\textup{(2)}}{\iff} a \equiv^{[n]}_{n,\alpha} 0
&
\cane_n(a) \le \beta &\overset{\textup{(4)}}{\iff} a =_{\beta+} 0
.
\end{align*}
\end{lem}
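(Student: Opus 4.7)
The plan is to translate each side of each equivalence into a containment between convex subgroups of $G$ and to exploit the fact that in an ordered abelian group the convex subgroups form a chain under $\subseteq$; this chain is by definition the preorder $\le$ on $\Aux$ (Definition~\ref{defn:aux}(4)).

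For (1), unfold $a \not\equiv_{n,\alpha} 0$ to $a \notin G_\alpha + nG$. The maximality clause in the definition of $\canc_n(a)$ says that $G_{\canc_n(a)}$ is the \emph{largest} convex $H \subseteq G$ with $a \notin H + nG$; hence, assuming $a \notin nG$, the condition $a \notin G_\alpha + nG$ is literally equivalent to $G_\alpha \subseteq G_{\canc_n(a)}$, i.e.\ to $\canc_n(a) \ge \alpha$. The side condition $a \notin nG$ is used only to ensure this maximal group is nontrivial: if $a \in nG$, then $\canc_n(a) \ge \alpha$ may hold vacuously while $a \equiv_{n,\alpha} 0$ is forced. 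For (2), apply Lemma~\ref{lem:cong*}(2) to rewrite $a \equiv^{[n]}_{n,\alpha} 0$ as $a \in \bigcap_{\alpha' > \alpha,\, \alpha' \in \Auxc_n}(G_{\alpha'} + nG)$. Then $\canc_n(a) \le \alpha \Rightarrow a \equiv^{[n]}_{n,\alpha} 0$ because every such $\alpha'$ satisfies $G_{\alpha'} \supsetneq G_\alpha \supseteq G_{\canc_n(a)}$, so by maximality $a \in G_{\alpha'} + nG$; the converse is by contradiction, taking $\alpha' := \canc_n(a)$ in the intersection.

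For (3) and (4) the key geometric observation is that whenever $a \ne 0$ one has $a \notin G_{\cane_n(a)}$: indeed $G_{\cane_n(a)} = H'_a$ is the chain-union of those $G_\gamma$ ($\gamma \in \Auxc_n$) for which $a \notin G_\gamma$, and for each such $\gamma$ the element $|a|$ is a strict upper bound for $G_\gamma$, hence also for the union. For (3), $\Rightarrow$ is then immediate: $G_\beta \subseteq G_{\cane_n(a)}$ together with $a \notin G_{\cane_n(a)}$ gives $a \notin G_\beta$. For $\Leftarrow$, if $\beta \in \Auxc_n$ then $G_\beta$ is literally one of the groups in the union defining $H'_a$; if $\beta \in \Auxe_n$, write $G_\beta = H'_b$ and run the same strict-upper-bound argument to conclude that each $G_\gamma$ in the union for $H'_b$ also appears in the union for $H'_a$, so $G_\beta \subseteq G_{\cane_n(a)}$. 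The side condition $a \ne 0$ is only needed in $\Rightarrow$, to avoid the degenerate case $G_{\cane_n(a)} = \{0\}$. For (4), unfold $a =_{\beta+} 0$ as $a \in G_{\beta+} = \bigcap\{G_{\alpha'} : \alpha' \in \Auxc_n,\, G_{\alpha'} \supsetneq G_\beta\}$; then $\Rightarrow$ follows because $H'_a \subseteq G_\beta \subsetneq G_{\alpha'}$ disqualifies $\alpha'$ from the union defining $H'_a$, forcing $a \in G_{\alpha'}$, and $\Leftarrow$ follows by contrapositive: if $H'_a \not\subseteq G_\beta$, then by the chain condition some $G_\gamma$ contributing to $H'_a$ strictly contains $G_\beta$, qualifies as one of the $\alpha'$'s, and gives $a \in G_\gamma$, contradicting $a \notin G_\gamma$.

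The only bookkeeping to watch is which auxiliary sort each index lives in, since $G_\alpha$ is defined by slightly different formulas in $\Auxc_n$, $\Auxe_n$, and $\Auxe_n^+$; but once Lemma~\ref{lem:cong*} is in hand for the $[n]$-decorated groups and the chain property of convex subgroups is invoked for $H'_a$, no real obstacle arises.
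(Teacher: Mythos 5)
Your proof is correct and follows essentially the same route as the paper: unfold the definitions to containments of convex subgroups, use the chain property, note the maximality defining $G_{\canc_n(a)}$, observe that $a\neq 0$ forces $a\notin G_{\cane_n(a)}$, and invoke Lemma~\ref{lem:cong*}(2) for the $G^{[n]}_\alpha$ description. The only cosmetic difference is that you argue (2) and (4) directly, whereas the paper treats the degenerate cases ($a\in nG$, $a=0$) up front and then derives (2) from (1) and (4) from (3).
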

\begin{proof}
(1)
For any convex subgroup $H \subseteq G$, we have
the equivalence
$G_{\canc_n(a)} \supseteq H
\iff a \notin H + nG$, where for ``$\Longrightarrow$'', we additionally assume
$a \notin nG$. Set $H := G_\alpha$.

(2) If $a \in nG$, then both sides are true anyway. Otherwise, (2) follows from (1)
using that the right hand side is equivalent to
$a \equiv_{n,\alpha'} 0$ for all $\alpha' > \alpha, \alpha' \in \Auxc_n$
by Lemma~\ref{lem:cong*}~(2).

(3) If $H \subseteq G$ is a union of groups of the form $G_\alpha$ for
$\alpha \in \Auxc_n$, then we have the equivalence
$G_{\cane_n(a)} \supseteq H
\iff a \notin H$, where for ``$\Longrightarrow$'', we additionally assume
$a \ne 0$. Set $H := G_\beta$.

(4) Again, for $a = 0$ both sides are true anyway and for $a \ne 0$,
the statement follows from (3).
\end{proof}

\subsection{More dimensions of $\FF_p$-vector spaces}
\label{subsect:dim}

In the definition of $\Lint$, we added predicates for the dimension
as $\FF_p$-vector spaces of certain quotients of groups
of the form $G_\alpha + pG$ or $G^{[p^s]}_\alpha + pG$; in particular,
we required $\alpha \in \Auxc_p$. The following lemma shows that this
is enough to get the dimension of arbitrary quotients of two groups
of this type, and for any $\alpha \in \Aux$.
Moreover, we also want to consider the quotient of $G$ by such a group.
To simplify formulating the lemma, we temporarily introduce the following notation.

\begin{notation}
Set $G^{[p^\infty]}_\alpha := G_\alpha$ and $G_\infty := G$.
\end{notation}

Note that all groups we are interested in form a long chain:
for $\alpha, \alpha' \in \Aux$ with $\alpha < \alpha'$, we have
\begin{align*}
&\dots\subseteq
G^{[p^\infty]}_\alpha + pG
\subseteq \dots
\subseteq G^{[p^2]}_\alpha + pG \subseteq
G^{[p]}_\alpha + pG
\subseteq \dots\\
&\dots \subseteq
G^{[p^\infty]}_{\alpha'} + pG
\subseteq \cdots \cdots \subseteq
G_\infty.
\end{align*}
Thus taking a quotient $(G^{[p^{s_2}]}_{\alpha_2} + pG)/(G^{[p^{s_1}]}_{\alpha_1} + pG)$
makes sense iff
\[\tag{*}
\alpha_1 < \alpha_2 \quad \vee \quad (\alpha_1 \asymp \alpha_2 \wedge s_1 \ge s_2)
\]
holds.

\begin{lem}\label{lem:dim-def}
Fix $p \in \PP$, $s_1, s_2 \in \NN \cup \{\infty\}$, and $\ell \in \NN_0$,
and fix two auxiliary sorts $\Aux_1$ and $\Aux_2$. We additionally allow
$\Aux_2 = \{\infty\}$. Then the set
\[
\{(\alpha_1, \alpha_2) \in \Aux_1 \times \Aux_2 \mid \text{\textup{(*)} holds and }\dim_{\FF_p}
(G^{[p^{s_2}]}_{\alpha_2} + pG)/(G^{[p^{s_1}]}_{\alpha_1} + pG) = \ell
\}
\]
is $\Main$-qf definable in $\Lint$.
\end{lem}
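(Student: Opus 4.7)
The plan is to decompose the quotient $(G^{[p^{s_2}]}_{\alpha_2} + pG)/(G^{[p^{s_1}]}_{\alpha_1} + pG)$ along a finite filtration whose successive quotients have dimensions expressible via the basic $\Lint$-predicates together with $\Aux$-quantifiers. First I reduce to the case $\alpha_1, \alpha_2 \in \Auxc_p$ (allowing $\alpha_2 = \infty$ with the convention $G_\infty = G$). By Lemma~\ref{lem:cong*}\,(2), $G^{[p^s]}_\alpha$ is determined by the cut $\{\alpha' \in \Auxc_p : \alpha' > \alpha\}$, and for $\alpha \in \Auxe_p \cup \Auxe^+_p$ the group $G_\alpha$ is a sup or inf of $G_{\alpha'}$'s with $\alpha' \in \Auxc_p$ (Definition~\ref{defn:aux}). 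So I express the condition via $\Aux$-quantifiers ranging over suitable $\Auxc_p$-witnesses, with the appropriate ``nearest successor'' clauses when such a witness exists and with ``limit'' clauses along the cut otherwise.

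Now assuming $\alpha_1 \le \alpha_2$ in $\Auxc_p$, I insert the intermediate groups $G_{\alpha_1} + pG$ and $G_{\alpha_2} + pG$ into the chain, obtaining
\[
G_{\alpha_1} + pG \;\subseteq\; G^{[p^{s_1}]}_{\alpha_1} + pG \;\subseteq\; G_{\alpha_2} + pG \;\subseteq\; G^{[p^{s_2}]}_{\alpha_2} + pG,
\]
where the middle inclusion uses Lemma~\ref{lem:cong*}\,(2) applied with $\alpha' = \alpha_2$. By additivity of dimension, the target dimension equals
\[
\dim\tfrac{G^{[p^{s_2}]}_{\alpha_2} + pG}{G_{\alpha_2} + pG} \;+\; \dim\tfrac{G_{\alpha_2} + pG}{G_{\alpha_1} + pG} \;-\; \dim\tfrac{G^{[p^{s_1}]}_{\alpha_1} + pG}{G_{\alpha_1} + pG},
\]
so the condition ``sum equals $\ell$'' becomes a finite disjunction over valid integer triples. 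The outer pieces are basic $\Lint$-predicates when $s_i \in \NN$ (and trivially $0$ when $s_i = \infty$); the intra-$\alpha$ dimensions $\dim(G^{[p^s]}_\alpha + pG)/(G^{[p^{s'}]}_\alpha + pG)$ with $s < s' \in \NN$ are handled by finite telescoping of the ``consecutive-$s$'' basic predicates.

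The remaining ingredient, and the main obstacle, is the cross-$\alpha$ dimension $\dim(G_{\alpha_2} + pG)/(G_{\alpha_1} + pG)$ for $\alpha_1 < \alpha_2$ in $\Auxc_p$, for which no single basic predicate directly computes it. My plan is to express ``this dimension equals $\ell'$\,'' by asserting the existence of a finite chain $\alpha_1 = \gamma_0 < \gamma_1 < \dots < \gamma_k = \alpha_2$ in $\Auxc_p$ (with $k$ bounded by $\ell'+1$) together with non-negative integers $d_1, \dots, d_k$ summing to $\ell'$, each $d_i$ witnessed at $\gamma_{i-1}$ by a basic predicate for $\dim(G^{[p]}_{\gamma_{i-1}} + pG)/(G_{\gamma_{i-1}} + pG)$, together with a completeness clause ensuring no intermediate $\Auxc_p$-element strictly between $\gamma_{i-1}$ and $\gamma_i$ contributes further dimension. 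The hard part will be the structural claim that any finite cross-$\alpha$ dimension is indeed realized by such a finite chain: if no such chain exists, there must be infinitely many dimension-contributing $\Auxc_p$-witnesses between $\alpha_1$ and $\alpha_2$, forcing the cross-$\alpha$ dimension to be infinite. I expect this to follow by a cofinality argument in $\Auxc_p$ combined with the structure of the $p$-adic filtration of the convex subgroup lattice.
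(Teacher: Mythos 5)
Your overall plan -- chaining through $\Auxc_p$-elements between $\alpha_1$ and $\alpha_2$ and adding up dimensions of consecutive links computed by the basic predicates -- is the same as the paper's. But there are two concrete problems in your execution. First, the subtraction formula
\[
\dim\tfrac{G^{[p^{s_2}]}_{\alpha_2} + pG}{G_{\alpha_2} + pG} + \dim\tfrac{G_{\alpha_2} + pG}{G_{\alpha_1} + pG} - \dim\tfrac{G^{[p^{s_1}]}_{\alpha_1} + pG}{G_{\alpha_1} + pG}
\]
does \emph{not} turn into a finite disjunction: writing $\ell_1 + \ell_2 - \ell_3 = \ell$ with $\ell_3 \le \ell_2$, the parameter $\ell_3$ is unbounded (and can even be infinite while the target dimension is finite, e.g.\ when $\alpha_1 \asymp \alpha_2$ and $s_1 = s_2$), so you would need infinitely many basic predicates. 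The paper avoids this entirely by never going below $H_1 = G^{[p^{s_1}]}_{\alpha_1} + pG$: it uses the telescoping chain $H_1 \subseteq G^{[p^{s_1-1}]}_{\alpha_1}+pG \subseteq \dots \subseteq G^{[p]}_{\alpha_1} = G_{\beta_1}+pG \subseteq G^{[p]}_{\beta_1} = G_{\beta_2}+pG \subseteq \dots \subseteq G^{[p]}_{\beta_k} = G_{\alpha_2}+pG \subseteq H_2$, where $I = \{\beta_1 < \dots < \beta_k\} = \{\beta \in \Auxc_p : \alpha_1 < \beta < \alpha_2\}$ and the equalities come from Lemma~\ref{lem:cong*}; every link is then a basic predicate, and ``sum $= \ell$'' is a genuine finite disjunction.

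Second, your ``completeness clause'' plus a prospective ``cofinality argument'' for the cross-$\alpha$ part is both overcomplicated and under-justified. The structural fact you are reaching for is much simpler than cofinality of the $\Auxc_p$-cut: \emph{every} $\beta \in \Auxc_p$ with $\alpha_1 < \beta < \alpha_2$ contributes at least one dimension, because choosing $a$ with $\canc_p(a) = \beta$ gives $a \in (G_{\beta'}+pG) \setminus (G_\beta + pG)$ for any $\beta' > \beta$. Hence if $|I| > \ell+1$ the dimension is automatically $> \ell$, and if $|I| \le \ell+1$ one just enumerates $I$ via $\Aux$-quantifiers and uses the chain above. There is no need to allow the $\gamma$'s to skip over intermediate witnesses or to rule out ``non-contributing'' ones -- none are non-contributing. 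Finally, your preliminary reduction to $\alpha_1,\alpha_2 \in \Auxc_p$ via ``nearest successor/limit clauses'' is unnecessary: the paper keeps $\alpha_1,\alpha_2$ in whatever sort they live in and expresses $G_{\alpha_i}+pG$ and $G^{[p]}_{\alpha_i}$ through the $\beta$'s in $\Auxc_p$ directly (Lemma~\ref{lem:cong*}), with a separate short argument for the degenerate case $\alpha_1 \asymp \alpha_2$ with no $\beta \in \Auxc_p$ at that level (in which case $H_1 = H_2$).
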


\begin{proof}
Set $H_i := G^{[p^{s_i}]}_{\alpha_i} + pG$ for $i = 1, 2$.
To obtain definability of the dimension of $H_2/H_1$ (in the above sense),
it suffices to find some intermediate groups such that
the dimensions of successive quotients are definable in the same sense;
we will use this method to reduce to dimensions
which are given by $\Lint$-predicates.

We will use Lemma~\ref{lem:cong*} several times to show that some groups of the form
$G_{\alpha} + pG$ or $G^{[p]}_{\alpha}$ are equal. By that lemma, such groups
are intersections of groups $G_\beta + pG$ for some $\beta \in \Auxc_p$ (note that we do
not require $\alpha \in \Auxc_p$), so we get equality
as soon as the corresponding sets of $\beta$ are equal.


Suppose first that $\alpha_1 \asymp \alpha_2$.
If there is no $\beta \in \Auxc_p$ with $\alpha_i \asymp \beta$, then
$G_{\alpha_i} + pG = G^{[p]}_{\alpha_i}$ (by Lemma~\ref{lem:cong*}),
which implies $H_1 = H_2$, since $G_{\alpha_i} + pG \subseteq H_i \subseteq G^{[p]}_{\alpha_i}$.
Thus we may suppose
that $\alpha_i \asymp \beta$ for some $\beta \in \Auxc_p$.
Moreover, we may suppose $s_1 > s_2$. If
$s_1 = \infty$, then ``$\dim_{\FF_p} H_2/H_1 = \ell$'' itself is a predicate of
$\Lint$; otherwise compute the dimension using the chain of groups
\[
H_1
\,\,\subseteq\,\, G^{[p^{s_1-1}]}_{\beta}+pG
\,\,\subseteq\,\, G^{[p^{s_1-2}]}_{\beta}+pG
\,\,\subseteq \dots \subseteq\,\,
H_2
.
\]

Now consider the case $\alpha_1 < \alpha_2$.
Set
$I := \{\beta \in \Auxc_p \mid \alpha_1 < \beta < \alpha_2\}$. We claim that if $I$ has cardinality bigger than $\ell+1$, then
$\dim_{\FF_p} H_2/H_1 > \ell$.
Indeed, for any $\beta,\beta' \in I$ with $\beta < \beta'$, if we take $a \in G$
with $\canc_p(a) = \beta$, we have $a \in (G_{\beta'} +pG) \setminus (G_{\beta} +pG)$
so we get a strictly ascending chain of more than $\ell + 1$ groups between $H_1$ and $H_2$.

Finally, if $I = \{\beta_1, \dots, \beta_k\}$,
we use the following chain to compute the dimension:
\[\begin{split}
H_1
\,\,\subseteq\,\,
G^{[p]}_{\alpha_1} \,=\, G_{\beta_1} + pG
\,\,\subseteq\,\,
G^{[p]}_{\beta_1} \,=\, G_{\beta_2} + pG
\,\,\subseteq\cdots\\
\dots\subseteq\,\,
G^{[p]}_{\beta_k} \,=\, G_{\alpha_2} + pG
\,\,\subseteq\,\,
H_2
\end{split}
\]
Here, all equalities follow from Lemma~\ref{lem:cong*}, the dimension
at the first and the last step have already been computed above, and the remaining
dimensions are given by $\Lint$-predicates.
\end{proof}

\subsection{The predicates $x =_\bullet k_\bullet$, $x \equiv_{m,\bullet} k_\bullet$
and $D^{[p^s]}_{p^r}(x)$}
\label{subsect:def1}

The $\Lsyn$-predicates
$x =_\bullet k_\bullet$ and $x \equiv_{m,\bullet} k_\bullet$
were defined using quantification over all convex subgroups $H$
of $G$ such that $G/H$ is discrete. The following lemma shows
that this is definable.

\begin{lem}\label{lem:bullet-def}
If $H \subseteq G$ is any convex subgroup such that $G/H$ is discrete,
then in each of the sorts $\Auxc_n, \Auxe_n$, $n \ge 2$,
there exists an $\alpha$ with
$H = G_\alpha$. In particular:
\begin{enumerate}
\item
$x =_\bullet k_\bullet$ and $x \equiv_{m,\bullet} k_\bullet$
are definable in $\Loag$.
\item
In any auxiliary sort, the set of $\alpha$ such that $G/G_\alpha$ is discrete
is $\Main$-qf definable (both, in $\Lint$ and in $\Lsyn$).
\end{enumerate}
\end{lem}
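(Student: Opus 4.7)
The plan is to prove first the main assertion (that every convex $H \subseteq G$ with $G/H$ discrete is realized as $G_\alpha$ in both sorts $\Auxc_n$ and $\Auxe_n$ for every $n \ge 2$), and then derive (1) and (2) as easy consequences.

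First I would pick an element $a \in G$ whose image in $G/H$ is the minimal positive element (this exists by discreteness of $G/H$) and show $G_{\canc_n(a)} = H$. For this I must check the two defining properties: (i) $a \notin H + nG$, and (ii) $a \in H' + nG$ for every convex $H' \supsetneq H$. For (i), the image of $a$ in $G/H$ generates a copy of $\ZZ$, and the minimal positive element of $\ZZ$ is not in $n\ZZ$ as soon as $n \ge 2$. For (ii), $H'/H$ is a nontrivial convex subgroup of $G/H$, so it contains some positive element and hence by convexity the minimal positive, i.e.\ already $a \in H'$.

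Next, for the sort $\Auxe_n$, I would reuse the \emph{same} $a$ and show $G_{\cane_n(a)} = H$. By Definition~\ref{defn:aux}(2),
\[
G_{\cane_n(a)} = \bigcup_{\gamma \in \Auxc_n,\; a \notin G_\gamma} G_\gamma.
\]
Since convex subgroups of $G$ form a chain, for each $\gamma \in \Auxc_n$ either $G_\gamma \subseteq H$ (in which case $a \notin G_\gamma$, because $a \notin H$) or $G_\gamma \supsetneq H$ (in which case the previous paragraph's argument gives $a \in G_\gamma$). Thus the index set is $\{\gamma : G_\gamma \subseteq H\}$, and by the $\Auxc_n$-part already proved, $H$ itself appears among those $G_\gamma$, so the union equals $H$.

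Finally, (1) and (2) fall out immediately. For (1), the existential ``there exists a convex $H \subseteq G$ with $G/H$ discrete such that \ldots'' occurring in the definitions of $x =_\bullet k_\bullet$ and $x \equiv_{m,\bullet} k_\bullet$ can be replaced by ``$\exists \alpha \in \Auxc_2\,(\discr(\alpha) \wedge \ldots)$'' via the main assertion, which is visibly $\Loag$-definable. For (2), on $\Auxc_p$ the set is given by the $\Lint$-symbol $\discr$ itself; on $\Auxe_p$ and $\Auxe^+_p$, the condition ``$G/G_\alpha$ is discrete'' is equivalent to $\exists \alpha' \in \Auxc_2\,(\alpha \asymp \alpha' \wedge \discr(\alpha'))$, which is $\Main$-qf in both $\Lint$ and $\Lsyn$ (the ``$\Leftarrow$'' direction is trivial and ``$\Rightarrow$'' is precisely the content of the main assertion). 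No single step is really the main obstacle; the only point to watch is the interplay between convexity and discreteness in the verification that $a \in H'$ for every convex $H' \supsetneq H$, which is exactly where the hypothesis that $G/H$ is discrete gets used.
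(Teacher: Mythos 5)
The proposal is correct and follows essentially the same route as the paper: pick a preimage $a$ of the minimal positive element of $G/H$, verify $H = G_{\canc_n(a)}$ by checking the two defining properties, then use the chain structure of convex subgroups to conclude $H = G_{\cane_n(a)}$, and finally realize the outer existential quantifiers in the definitions of $x =_\bullet k_\bullet$, $x \equiv_{m,\bullet} k_\bullet$ and of discreteness via quantification over $\Auxc_2$.

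One small imprecision in step (i): to show $a \notin H + nG$ you need $\bar a \notin n(G/H)$, but your argument (``$\bar a$ generates a copy of $\ZZ$, and $1 \notin n\ZZ$'') only rules out $\bar a \in n\langle \bar a\rangle$, which is weaker since $\langle \bar a\rangle$ may be a proper subgroup of $G/H$. The correct and equally short argument is: if $\bar a = n\bar b$ with $n\ge 2$ and $\bar b\ne 0$, then $0 < |\bar b| < |\bar a|$, contradicting minimality of $\bar a$. This does not affect the rest of the argument, which is sound.
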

\begin{proof}
Suppose that $G/H$ is discrete, and choose any $a \in G$ in the preimage
of the smallest positive element of $G/H$. Then $a \notin H + nG$ for any
$n \ge 2$, but $a \in H' \subseteq H' + nG$ for any convex $H' \supsetneq H$; hence
$H = G_{\canc_n(a)}$. Moreover, since $a \in H' \setminus H$, we also have
$H = G_{\cane_n(a)}$.

(1) $x =_\bullet k_\bullet$ iff $\exists (\alpha \in \Auxc_2) \, (\discr(\alpha)
\wedge x =_\alpha k_\alpha)$; and similarly for $\equiv_{m,\bullet}$.

(2) $G/G_\alpha$ is discrete iff $\exists(\beta \in \Auxc_2) \, (\beta \asymp \alpha \wedge \discr(\beta))$.
\end{proof}

The following lemma shows the connection between
the $\Lsyn$-predicates $x =_\bullet k_\bullet$,
$x \equiv_{m,\bullet} k_\bullet$ and $D^{[p^s]}_{p^r}$
and the corresponding $\Lint$-predicates.
Each of these $\Lsyn$-predicates defines a union of some sets
$X_\alpha$ given by the corresponding $\Lint$-predicate,
where $\alpha$ runs through a certain auxiliary set $\Xi$.
The point is that if $x$ lies in this union, then $\alpha$ can be recovered from $x$
by a definable function form the union to $\Xi$.
This will allow us
to define the sets $X_\alpha$ using the corresponding $\Lsyn$-predicate.

\begin{lem}\label{lem:L2-unions}
For $x \in G$ we have the following implications (1a), (2a), (3a),
which in particular imply the equivalences (1b), (2b), (3b).\\
\textup{(1)} For $k \in \ZZ \setminus \{0\}$ and $\alpha \in \Aux$:
\begin{gather*}
\discr(\alpha) \wedge x =_\alpha k_\alpha
\Longrightarrow \alpha \asymp \cane_2(x)
\tag{1a}
\\
x =_\bullet k_\bullet
\iff
\discr(\cane_2(x)) \wedge x =_{\cane_2(x)} k_{\cane_2(x)}
\tag{1b}
\end{gather*}
\textup{(2)} For $m \in \NN, k \in \{1, \dots, m-1\}$ and $\alpha \in \Aux$:
\begin{gather*}
\discr(\alpha) \wedge x \equiv_{m,\alpha} k_{\alpha}
\Longrightarrow \alpha \asymp \canc_m(x)
\tag{2a}
\\
x \equiv_{m,\bullet} k_\bullet
\iff
\discr(\canc_m(x)) \wedge x \equiv_{m,\canc_m(x)} k_{\canc_m(x)}
\tag{2b}
\end{gather*}
\textup{(3)} For $p \in \PP$, $r,s\in \NN$ with $s \ge r$ and $\alpha \in \Auxc_{p^r}$:
\begin{gather*}
x \equiv^{[p^s]}_{p^r,\alpha} 0 \wedge x \not\equiv_{p^r,\alpha} 0
\Longrightarrow \alpha  = \canc_{p^r}(x)
\tag{3a}
\\
D^{[p^s]}_{p^r}(x)
\iff
x \equiv^{[p^s]}_{p^r,\canc_{p^r}(x)} 0 \wedge x \not\equiv_{p^r,\canc_{p^r}(x)} 0
\tag{3b}
\end{gather*}
\end{lem}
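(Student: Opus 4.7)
The plan is to prove the three implications (1a), (2a), (3a); each equivalence (b) then follows immediately. The ``$\Longleftarrow$'' directions of (b) are just instances of the existential statements defining the corresponding $\Lsyn$-predicates. For ``$\Longrightarrow$'', any convex subgroup $H$ witnessing the left-hand side equals $G_\alpha$ for some $\alpha$ in an appropriate auxiliary sort, by Lemma~\ref{lem:bullet-def} (or, for (3b), by the very definition of $\Auxc_p$); then (a) identifies that $\alpha$ with $\cane_2(x)$, $\canc_m(x)$, or $\canc_{p^r}(x)$ respectively. So it suffices to establish the three (a)-statements.

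The key observation behind (1a) and (2a) is an \emph{absorption principle}: if $G/G_\alpha$ is discrete with minimal positive element $1_{G_\alpha}$ and $H$ is a convex subgroup with $H \supsetneq G_\alpha$, then $H/G_\alpha$ is a nontrivial convex subgroup of $G/G_\alpha$, hence contains $1_{G_\alpha}$, so any lift $\tilde 1$ of $1_{G_\alpha}$ lies in $H$. Alongside this, a small arithmetic point: for $k \in \{1, \dots, m-1\}$, the element $k \cdot 1_{G_\alpha}$ is never $m$-divisible in $G/G_\alpha$, since every nonzero $m$-multiple in a discrete oag has absolute value at least $m \cdot 1_{G_\alpha}$.

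From these two ingredients, (1a) unfolds as follows. Writing $x = k \tilde 1 + g$ with $g \in G_\alpha$ gives $x \notin G_\alpha$ (using $k \ne 0$), whence $G_\alpha \subseteq G_{\cane_2(x)}$ after fixing a representative of $\alpha$ in $\Auxc_2$ via Lemma~\ref{lem:bullet-def}. Conversely, for any $\beta \in \Auxc_2$ with $G_\beta \supsetneq G_\alpha$, absorption places $\tilde 1 \in G_\beta$ and hence $x \in G_\beta$; contrapositively, $x \notin G_\beta$ forces $G_\beta \subseteq G_\alpha$, yielding the reverse inclusion. Part (2a) is strictly analogous: $x \notin G_\alpha + mG$ follows from the divisibility remark, and for any convex $H \supsetneq G_\alpha$ the absorption step produces $x = k\tilde 1 + g + mz \in H + mG$, ruling out any such $H$ as a candidate larger than $G_{\canc_m(x)}$.

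Part (3a) uses no discreteness and is simpler. From $s \ge r$ and the definition $G^{[p^s]}_\alpha = \bigcap_{H \supsetneq G_\alpha}(H + p^s G)$, we obtain $G^{[p^s]}_\alpha + p^r G \subseteq H + p^r G$ for every convex $H \supsetneq G_\alpha$. Hence $x \equiv^{[p^s]}_{p^r, \alpha} 0$ forces $x \in H + p^r G$ for every such $H$, and combined with $x \not\equiv_{p^r, \alpha} 0$ this makes $G_\alpha$ the maximal convex subgroup with $x \notin \cdot + p^r G$, i.e., $\alpha = \canc_{p^r}(x)$. I do not foresee a serious obstacle here; the main subtlety is remembering to invoke Lemma~\ref{lem:bullet-def} in the ``$\Longrightarrow$'' of (1b)/(2b) to promote the abstract witnessing convex subgroup to an element of the required auxiliary sort.
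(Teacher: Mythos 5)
Your proposal is correct and follows essentially the same route as the paper: for (1a)/(2a) the two key observations are that the left-hand side forces $x\notin G_\alpha$ (resp.\ $x\notin G_\alpha+mG$) while for every convex $H\supsetneq G_\alpha$ one has $x\in H$ (resp.\ $x\in H+mG$); for (3a) one reduces to $x\equiv^{[p^r]}_{p^r,\alpha}0$ and reads off the definition of $\canc_{p^r}$; and the (b)-statements then follow by producing the witnessing $\alpha$ via Lemma~\ref{lem:bullet-def} and applying (a). You spell out the ``absorption'' of the minimal positive element and the indivisibility of $k\cdot 1$ more explicitly (and state the needed membership $x\in H+mG$ in (2a) more precisely than the paper does), but the argument is the same.
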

\begin{rem}
The map $\cane_2$ in (1) can of course be replaced by any other
map $\cane_p$, $p \in \PP$.
\end{rem}
\begin{proof}[Proof of Lemma~\ref{lem:L2-unions}]
In (1a) and (2a), discreteness of $G/G_\alpha$ and
the choice of $k$ ensures that the left
hand side implies $x \notin G_\alpha$ (and even
$x \notin G_\alpha + mG$ in the case of (2a)). On the other hand, we have
$x \in H$ for any convex group $H \supsetneq G_\alpha$.
This implies the corresponding right hand side.
For (3a), use $x \equiv^{[p^s]}_{p^r,\alpha} 0 \Rightarrow x \equiv^{[p^r]}_{p^r,\alpha} 0$
and Lemma~\ref{lem:can-eq}.

In (Xb), $x$ satisfies the left hand side
if and only there is an $\alpha$ (in $\Auxe_2$, $\Auxc_m$ or $\Auxc_{p^r}$,
respectively)
such that $x$ satisfies
the left hand side of (Xa). The right hand side of (Xa) says how
this $\alpha$ can be obtained from $x$. Plugging this in yields the
right hand side of (Xb).
\end{proof}

\subsection{Translation between $\Lsyn$ and $\Lint$}
\label{subsect:L2-vs-L3}

When introducing the language $\Lsyn$, we claimed that it is strong enough
to express $\Lint$ without $\Main$-quantifiers.
On the other hand, we want to deduce quantifier elimination in $\Lint$ from
quantifier elimination in $\Lsyn$, hence we also
need (a version of) the other direction.
This is what we prove in this section.
At the end of the section, the translation $\Lsyn \rightsquigarrow \Lint$ will
be applied to deduce
Theorem~\ref{thm:qe-int} from Theorem~\ref{thm:qe-form}.

\begin{prop}\label{prop:L3-to-L2}
Any $\Lint$-predicate can be expressed in $\Lsyn$ without $\Main$-quantifiers.
\end{prop}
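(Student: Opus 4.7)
The plan is to check, predicate by predicate, that every atomic $\Lint$-formula has a $\Main$-qf $\Lsyn$-equivalent. The sorts of $\Lint$ and $\Lsyn$ agree; the $\Aux$-level language ($\le$ on $\Aux$, $\discr$, and the dimension predicates) is common to both; and on the main sort $\Lsyn$ contains the full $\Loag$ language plus each $\equiv_m$. Hence the only $\Lint$-atoms needing translation are the four $\Main$-$\Aux$-connecting families
\[
x_1 =_\alpha x_2 + k_\alpha,\quad
x_1 <_\alpha x_2 + k_\alpha,\quad
x_1 \equiv_{m,\alpha} x_2 + k_\alpha,\quad
x \equiv^{[m']}_{m,\alpha} y.
\]

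First I dispose of the unshifted cases ($k=0$) and of $\equiv^{[m']}_{m,\alpha}$. Setting $a := x_1 - x_2$ (or $a := x - y$), each such predicate asserts that $a$ lies in a $\Loag$-definable subgroup depending only on $G_\alpha$. Lemma~\ref{lem:can-eq} rewrites each such membership as an $\asymp$- or $\le$-comparison between $\alpha$ and $\cane_p(a)$ or $\canc_m(a)$, after a trivial case-split on the degenerate situations $a = 0$ or $a \in mG$; Lemmas~\ref{lem:prime-c}(2) and~\ref{lem:prime-e} then let me replace $\canc_m$ and $\cane_n$ (which are not in $\Lsyn$ for composite $m,n$) by finite maxima of the $\Lsyn$-functions $\canc_{p^r}$ and $\cane_p$. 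For $x_1 <_\alpha x_2$ with $k=0$, convexity of $G_\alpha$ gives the $\Main$-qf equivalence $x_1 <_\alpha x_2 \iff x_1 < x_2 \wedge \neg(x_1 =_\alpha x_2)$, so the strict inequality reduces to the equality case.

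Next I address the shifted predicates with $k \ne 0$. If $\neg\discr(\alpha)$ then $k_\alpha = 0$ and the predicate collapses to its unshifted version, already handled. If $\discr(\alpha)$, Lemma~\ref{lem:L2-unions}(1a, 2a) shows that the $\Lint$-relation on $(x_1,x_2,\alpha)$ forces $\alpha \asymp \cane_2(x_1 - x_2)$ (for $=_\alpha$) or $\alpha \asymp \canc_m(x_1 - x_2)$ (for $\equiv_{m,\alpha}$, after first reducing $k$ modulo $m$ into $\{1,\dots,m-1\}$), and conversely the existence of any witnessing $\alpha$ is recorded by the $\Lsyn$-predicates $a =_\bullet k_\bullet$ and $a \equiv_{m,\bullet} k_\bullet$ respectively. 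Thus each such predicate becomes the conjunction of $\discr(\alpha)$, the matching $\asymp$-comparison, and the corresponding bullet-predicate on $x_1 - x_2$, all $\Main$-qf in $\Lsyn$. Finally, $x_1 <_\alpha x_2 + k_\alpha$ with $k \ne 0$ unfolds into a finite disjunction of the unshifted inequality (or its reverse, for $k < 0$) together with finitely many shifted equalities $x_1 =_\alpha x_2 + j_\alpha$ for $0 \le j < |k|$, all already handled.

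For $x \equiv^{[m']}_{m,\alpha} y$ with $\alpha \in \Auxc_p$, Lemma~\ref{lem:m-mid-n} lets me assume $m \mid m'$; Lemma~\ref{lem:cong-factorize} then factors the membership in $G^{[m']}_\alpha + mG$ as a conjunction over primes, and among the prime-power conditions only the $p$-part is nontrivial (for primes $q \ne p$ the relevant group absorbs, in view of Lemma~\ref{lem:prime-c}(1) and the fact that $G^{[q^s]}_\alpha + q^r G$ then contains enough $q$-divisibility to make the condition trivial). This leaves $x \equiv^{[p^s]}_{p^r,\alpha} y$ with $r \le s$, which is translated via Lemma~\ref{lem:L2-unions}(3a, 3b): it holds iff either $x - y \equiv_{p^r,\alpha} 0$ (an unshifted case already handled) or $D^{[p^s]}_{p^r}(x-y)$ holds together with $\alpha \asymp \canc_{p^r}(x - y)$; both conjuncts are $\Main$-qf in $\Lsyn$. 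The main technical care in the whole argument lies in bookkeeping the degenerate subcases ($a = 0$, $a \in mG$, $\neg\discr(\alpha)$, $\alpha$ extremal in its sort) and the prime-power reductions of $\canc_n$, $\cane_n$, $G^{[m']}_\alpha + mG$; once these are in place, Lemmas~\ref{lem:can-eq}, \ref{lem:L2-unions}, \ref{lem:prime-c}, \ref{lem:prime-e}, \ref{lem:m-mid-n} and~\ref{lem:cong-factorize} assemble the required $\Main$-qf $\Lsyn$-translations mechanically.
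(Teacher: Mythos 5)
Your overall strategy matches the paper's: reduce to the handful of $\Main$--$\Aux$-connecting predicates, split into $k=0$ and $k\ne 0$, invoke Lemmas~\ref{lem:can-eq} and \ref{lem:L2-unions} to rewrite group memberships and shifted atoms as order comparisons with $\canc$, $\cane$, $\bullet$-predicates, and reduce to primes via Lemmas~\ref{lem:prime-c}, \ref{lem:prime-e}, \ref{lem:m-mid-n}, \ref{lem:cong-factorize}. Most of the proposal is sound.

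However, there is a concrete error in the treatment of $x \equiv^{[m']}_{m,\alpha} y$ for $\alpha \in \Auxc_p$. After factoring over primes via Lemma~\ref{lem:cong-factorize}, you assert that for primes $q \ne p$ the $q$-conjunct ``absorbs'' and is trivially true. This is false. Take $G = \ZZ \oplus \ZZ$ with lexicographic order and $\alpha := \canc_p(0) \in \Auxc_p$, so $G_\alpha = \{0\}$. For any prime $q \ne p$ and $r \le s$ one computes $G^{[q^s]}_{\alpha} + q^rG = q^r\ZZ \oplus \ZZ \subsetneq G$, so $x \equiv^{[q^s]}_{q^r,\alpha} y$ really constrains $x-y$; the appeal to Lemma~\ref{lem:prime-c}(1) does not give what you want. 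The fix is not to discard the $q$-part but to translate it by the same recipe you already used for the $p$-part: the argument of Lemma~\ref{lem:L2-unions}(3), read off from Lemma~\ref{lem:can-eq}(1)--(2), gives, for $\alpha$ in \emph{any} auxiliary sort,
\[
x \equiv^{[q^s]}_{q^r,\alpha} y \ \iff\ x \equiv_{q^r,\alpha} y \ \vee\ \bigl(\alpha \asymp \canc_{q^r}(x-y) \wedge D^{[q^s]}_{q^r}(x-y)\bigr),
\]
where the cross-sort $\asymp$ is $\Main$-qf definable from the $\le$ relations of $\Lsyn$ between all pairs of auxiliary sorts. So the $q$-conjunct is handled exactly like the $p$-conjunct; it just targets $\canc_{q^r}$ and $D^{[q^s]}_{q^r}$ in place of $\canc_{p^r}$ and $D^{[p^s]}_{p^r}$. (A smaller omission: for $x_1 =_\alpha x_2$ with $\alpha \in \Auxe^+_p$, Lemma~\ref{lem:can-eq}(4) is phrased for $\alpha = \beta{+}$ with $\beta \in \Auxe_p$, so you need the $\Main$-qf definability of $\beta \mapsto \beta{+}$, or, as the paper does, a universal $\Aux$-quantifier over $\Auxc_p$.)
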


\begin{rem}
Since any function symbol in $\Lint$ is also contained in $\Lsyn$,
this implies that any $\Main$-qf $\Lint$-formula
is equivalent to an $\Main$-qf $\Lsyn$-formula.
\end{rem}

\begin{proof}[Proof of Proposition~\ref{prop:L3-to-L2}]
The predicates of $\Lint \setminus \Lsyn$ are the following:
\begin{itemize}
\item
$x \diamrel_\eta y + k_\eta$
where $\diamond \in \{=, <, \equiv_m\}$,
$k \in \ZZ$, $m \in \NN$, and where $\eta$ may be from any of the
sorts of $\Aux$;
\item
$x \equiv^{[n]}_{m,\eta} y$ for $m, n \in \NN$ and where $\eta$ is
from one of the sorts $\Auxc_p$.
\end{itemize}
Concerning
$x \diamrel_\eta y + k_\eta$, if $k \ne 0$, then we may assume that
$G/G_\eta$ is discrete, since otherwise by definition $k_\eta = 0$.
(Recall that this discreteness is definable on any auxiliary sort by
Lemma~\ref{lem:bullet-def}.)

We now translate all these predicates into $\Lsyn$, starting with the easier ones
so that we can use them for the more difficult ones.

First consider $x \equiv_{m,\eta} y$ (for $\eta$ from any $\Aux$-sort).
By Lemma~\ref{lem:can-eq}~(1), this
is equivalent to $\canc_m(x - y) < \eta \vee x \equiv_m y$, which
is equivalent to $\bigwedge_{p^r\mid\mid m}\canc_{p^r}(x - y) < \eta \vee x \equiv_m y$
by Lemma~\ref{lem:prime-c}.

Next consider $x =_\eta y$. If $\eta \in \Auxc_p \cup \Auxe_p$,
then by Lemma~\ref{lem:can-eq}~(3) this is equivalent to
$\cane_p(x - y) < \eta \vee x = y$.
If $\eta \in \Auxe^+_p$,
then it is equivalent to
$\forall (\theta \in \Auxc_p)\,(\theta \ge \eta \rightarrow x =_\theta y)$.

Now consider $x =_\eta y + k_{\eta}$ for $k \ne 0$.
(Recall that we assume now that $G/G_\eta$ is discrete.)
Then Lemma~\ref{lem:L2-unions} (1a) implies $\eta = \cane_2(x - y)$,
and under this assumption,
Lemma~\ref{lem:L2-unions} (1b) implies that $x =_\eta y + k_{\eta}$
is equivalent to $x - y =_\bullet k_\bullet$.
Thus (under the assumption $\discr(\eta)$):
\[
x =_\eta y + k_{\eta} \iff
\eta = \cane_2(x - y) \wedge
x - y =_\bullet k_\bullet
.
\]

Exactly the same argument yields, for $m \in \NN$ and $k \in \{1, \dots, m-1\}$
(which we may assume):
\[
x \equiv_{m,\eta} y + k_{\eta} \iff \eta = \canc_m(x - y) \wedge
x - y \equiv_{m,\bullet} k_\bullet
.
\]

Concerning $x \equiv^{[n]}_{m,\eta} y$, we may assume that $m$ and $n$
are prime powers by Lemma~\ref{lem:cong-factorize}, and we may assume $m \mid n$
by Lemma~\ref{lem:m-mid-n}; so $m = p^r$ and $n = p^s$ for some $p \in \PP$
and $s \ge r$. Moreover, it suffices to define
$x \equiv^{[p^s]}_{p^r,\eta} y \wedge \neg x \equiv_{p^r,\eta} y$;
this again works in the same way as before with Lemma~\ref{lem:L2-unions},
yielding:
\[
x \equiv^{[p^s]}_{p^r,\eta} y
\iff
x \equiv_{p^r,\eta} y
\vee
(\eta = \canc_{p^r}(x - y) \wedge
D^{[p^s]}_{p^r}(x - y)
).
\]

Finally, consider $x <_\eta y + k_\eta$.
If $k = 0$, then this is equivalent to $x < y \wedge x \ne_\eta y$.
If $k$ is positive, then we take the disjunction of this with
$x =_\eta y + i_\eta$ for $0 \le i < k$;
if $k$ is negative, then we take the conjunction of this
with $x \ne_\eta y + i_\eta$ for $k \le i < 0$.
\end{proof}

\begin{prop}\label{prop:L2-to-L3}
Every quantifier free $\Lsyn$-formula
is equivalent to an $\Lint$-formula
in family union form.
\end{prop}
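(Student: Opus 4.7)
The plan is to pass through four stages: DNF the $\Lsyn$-formula; replace every canonical-map term $\canc_{p^r}(t(\xbar))$ or $\cane_p(s(\xbar))$ by a fresh existentially quantified $\Aux$-variable, pinned down by a $\Main$-qf $\Lint$-defining condition; translate the remaining $\Lsyn$-specific main-sort atoms into $\Lint$; and finally enforce pairwise inconsistency by a Boolean refinement.

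First I would put $\phi$ in disjunctive normal form and handle each conjunction $\phi_0$ of $\Lsyn$-literals separately, combining the resulting pieces at the very end. For such a $\phi_0$, enumerate all terms $t_1(\xbar),\dots,t_N(\xbar)$ that occur as arguments of canonical maps, either directly in $\phi_0$ or in the translations of the next step. Introduce fresh $\Aux$-variables $\thetabar=(\theta_1,\dots,\theta_N)$, one for each, and a defining conjunction $\chi(\xbar,\thetabar)$ stating $\theta_\nu = \canc_{p_\nu^{r_\nu}}(t_\nu(\xbar))$ or $\theta_\nu = \cane_{p_\nu}(t_\nu(\xbar))$. By Lemma~\ref{lem:can-eq}, each such equality is expressible by a $\Main$-qf $\Lint$-formula (with a small case split on whether $t_\nu$ lies in the relevant $nG$, and on $t_\nu=0$ for $\cane_n$); then replace every canonical-map subterm in $\phi_0$ by the corresponding $\theta_\nu$.

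Next, use the equivalences (1b), (2b), (3b) in Lemma~\ref{lem:L2-unions} to rewrite the remaining $\Lsyn$-specific main-sort atoms $x=_\bullet k_\bullet$, $x\equiv_{m,\bullet}k_\bullet$, and $D^{[p^s]}_{p^r}(x)$ in terms of $\Lint$-predicates and a canonical-map term, feeding any new canonical-map terms back into the previous step. The process closes after one round, and the result has the shape $\exists\thetabar\,(\chi(\xbar,\thetabar)\wedge\tilde\phi_0(\xbar,\etabar,\thetabar))$ with $\chi\wedge\tilde\phi_0$ a quantifier-free Boolean combination of $\Lint$-atoms. Put $\chi\wedge\tilde\phi_0$ in DNF; for each conjunct, route its purely auxiliary literals into $\xi_i(\etabar,\thetabar)$ and its main-sort-involving literals (including the pinning literals from $\chi$) into $\psi_i(\xbar,\thetabar)$. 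Doing this for all conjunctions of the original DNF yields a preliminary disjunction $\bigvee_i\exists\thetabar\,(\xi_i\wedge\psi_i)$.

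The hard part will be enforcing pairwise inconsistency in the strong sense of Definition~\ref{defn:fam-union}, which compares pieces across \emph{different} witness tuples $\alphabar,\alphabar'$. I would apply a standard Boolean refinement: let $L_1,\dots,L_M$ be all $\Lint$-atoms occurring in some $\psi_i$ and let $\Xi_1,\dots,\Xi_K$ be the atoms of the Boolean algebra generated by $\xi_1,\dots,\xi_k$, and rewrite the disjunction, indexed by the consistent pairs $(\sigma,\tau)$ of sign vectors matching some original $i$, as pieces $\exists\thetabar\,(\Xi_\sigma\wedge\bigwedge_j L_j^{\tau(j)})$. The resolution of the cross-$\alphabar$ subtlety is that the pinning literals from $\chi$, present in every $\psi_i$ hence in every retained $\Psi_\tau=\bigwedge_j L_j^{\tau(j)}$, force $\thetabar$ to be a definable function of $\xbar$; a joint realization with $\alphabar\ne\alphabar'$ is therefore impossible, while for $\alphabar=\alphabar'$ distinctness of $\sigma$ or $\tau$ produces an outright Boolean inconsistency in either the auxiliary or the main part.
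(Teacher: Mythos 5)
Your plan mirrors the paper's proof quite closely: both introduce fresh $\Aux$-variables $\theta_\nu$ to absorb the canonical-map terms $\canc_{p^r}(t(\xbar))$, $\cane_p(t(\xbar))$, pin them with $\Main$-qf $\Lint$-conditions derived from Lemma~\ref{lem:can-eq}, use the (Xb) equivalences of Lemma~\ref{lem:L2-unions} for the predicates $=_\bullet$, $\equiv_{m,\bullet}$, $D^{[p^s]}_{p^r}$, and then appeal to the functional character of the pinning to get the cross-$\alphabar$ inconsistency essentially for free after a DNF refinement. This is exactly the structure of the paper's argument, so the core of your proposal is correct.

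Two small gaps are worth flagging. First, you never translate the $\Lsyn$ main-sort atoms $t_1 < t_2$ and $t_1 \equiv_m t_2$: these are \emph{not} $\Lint$-atoms, since the main sort of $\Lint$ carries only $0,+,-$. The paper replaces them by $t_1 \diamond_{\canc_2(0)} t_2$, which introduces one more canonical-map term $\canc_2(0)$ and is then handled by the same pinning mechanism; without this step your output is not an $\Lint$-formula. Second, your final inconsistency argument asserts that the pinning literals sit entirely inside the main-sort parts $\psi_i$. That is slightly off: the pinning conditions from Lemma~\ref{lem:can-eq} are disjunctions whose second disjunct involves aux-sentences such as ``$\theta$ is minimal in $\Auxc_m$'' (and, for $\cane_p$, an auxiliary quantifier $\exists\theta'\,(\theta'=\theta\mathord{+}\wedge\cdots)$). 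After DNF these aux pieces land in $\xi_i$, not $\psi_i$. The uniqueness of $\alphabar$ given $\xbar$ still holds, but it uses the conjunction $\xi_i\wedge\psi_i$ rather than $\psi_i$ alone — which is what Definition~\ref{defn:fam-union} actually asks for, so the logic goes through once stated accurately. A related minor point: DNF-ing $\phi$ first and introducing $\thetabar$ per conjunct can give tuples of different lengths across disjuncts; it is cleaner to gather all canonical-map terms of the whole formula into one $\thetabar$ before doing any DNF, as the paper does.
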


\begin{proof}
Let $\phi(\xbar, \etabar)$ be a given quantifier free $\Lsyn$-formula;
we have to get rid of the following kind of atoms:
\begin{enumerate}
\item
$t_1 \diamrel t_2$ where $\diamond \in \{<,>, \equiv_m\}$ and $t_1$, $t_2$ are
main sort terms (and $m \in \NN$);
\item
$t =_\bullet k_\bullet$, $t \equiv_{m,\bullet} k_\bullet$
and $D^{[p^s]}_{p^r}(t)$, where $t$ is a main sort term (and $p \in \PP$, $m, r, s \in \NN$);
\item
atoms involving $\canc_{p^r}(t)$ or $\cane_p(t)$, where $t$ is a main sort term
(and $p \in \PP, r \in \NN$).
\end{enumerate}
An atom $t_1 \diamrel t_2$ of type (1) can be replaced by $t_1 \diamrel_{\canc_2(0)} t_2$.
To get rid of the atoms of type (2), apply Lemma~\ref{lem:L2-unions} (1b), (2b), (3b).
It remains to get rid of the functions $\canc_m$ and $\cane_p$ (for $m \in \NN$, $p \in \PP$) (including the newly introduced ones) and bring the formula into family union form.

Let $\tau_i(\xbar)$ be the terms of
$\phi$ which are of the form
$\canc_{m}(t(\xbar))$ or $\cane_p(t(\xbar))$, where $m \in \NN$, $p \in \PP$, and where $t(\xbar)$ is a main sort term. We replace $\phi$ by the equivalent formula
\[
\exists \thetabar\, \big((\bigwedge_i\tau_i(\xbar) = \theta_i) \wedge
    \phi[\textstyle{\frac{\theta_i}{\tau_i(\xbar)}}]_{i}\big).
\]
(Here, the notation $\phi[\frac rs]$ means: the formula obtained from $\phi$
by replacing all occurrences of $s$ by $r$.)
The atoms $\tau_i(\xbar) = \theta_i$ can be expressed in $\Lint$ using Lemma~\ref{lem:can-eq}:
$\canc_{m}(t(\xbar)) = \theta$ is equivalent to
\begin{gather*}
(t(\xbar) \not\equiv_{m,\theta} 0 \wedge t(\xbar) \equiv^{[m]}_{m,\theta} 0)
\,\,\vee\\
 (\theta \text{ is the minimal element of }\Auxc_m \wedge t(\xbar) \equiv_{m,\theta} 0)
\end{gather*}
(the second line treats the case $t(\xbar) \equiv_{m} 0$),
and $\cane_p(t(\xbar)) = \theta$ is equivalent to
\begin{gather*}
(t(\xbar) \ne_{\theta} 0 \wedge t(\xbar) =_{\theta+} 0)
\,\,\vee\\
(\theta \text{ is the minimal element of }\Auxe_p \wedge t(\xbar) = 0)
,
\end{gather*}
where $t(\xbar) =_{\theta+} 0$ can be written in family union form as
\[
\exists (\theta' \in \Auxe^+_p)\,(\theta' = \theta\mathord{+} \wedge t(\xbar) =_{\theta'} 0).
\]
(Here, we use $\Main$-qf definability of $\theta \mapsto \theta+$).

Now our formula $\phi(\xbar, \etabar)$ is purely in the language $\Lint$ and it is of the form
$\exists \thetabar\, \psi(\xbar, \etabar, \thetabar)$,
where $\thetabar$ is auxiliary and $\psi$ is a boolean combination of quantifier free
parts and of parts living purely in $\Aux$.
Moreover, by the way in which the quantifier $\exists \thetabar$ has been introduced,
$\psi(\xbar, \etabar, \alphabar)$ and
$\psi(\xbar, \etabar, \alphabar')$ are inconsistent
for any $\alphabar, \alphabar' \in \Aux$,
$\alphabar \ne \alphabar'$. Thus to turn $\phi(\xbar, \etabar)$ into
family union form, it remains to bring $\psi$ into a disjunctive normal form where the conjunctive clauses are pairwise inconsistent, and then pull the disjunction to the outside (here, we treat the $\Aux$-parts of $\psi$ with quantifiers as atoms).
This kind of disjunctive normal form can be obtained by using conjunctive clauses each
of which contains all atoms occurring in $\psi$, either positively or negatively.
\end{proof}

Now it is easy to deduce $\Lint$ quantifier elimination from $\Lsyn$ quantifier elimination:

\begin{proof}[Proof of Theorem~\ref{thm:qe-int} from Theorem~\ref{thm:qe-form}]
Any $\Lint$-formula is equivalent to an $\Lsyn$-formula. Using Theorem~\ref{thm:qe-form},
we can turn this into an $\Lsyn$-formula in family union form
\[
\phi(\xbar, \etabar) =
\bigvee_{i = 1}^k
\exists \thetabar\, \big(\xi_i(\etabar, \thetabar) \wedge \psi_i(\xbar, \thetabar)\big)
.
\]
Since $\Lsyn$ and $\Lint$ agree on the auxiliary sorts, the formulas $\xi_i$
are also $\Lint$-formulas.
By Proposition~\ref{prop:L2-to-L3}, we may replace each $\psi_i$ by
an $\Lint$-formula in family union form. By pulling the quantifiers and disjunctions
of these $\psi_i$ to the outside, we obtain a formula which is in family union
form as a whole.
\end{proof}

\section{The main proofs}
\label{sect:proofs}

\subsection{Partial quantifier elimination in general}
\label{subsect:no-groups}

In this section, we prove Proposition~\ref{prop:no-groups} which gives
a general method to eliminate main sort quantifiers
when the only connection between the main sorts and the auxiliary sorts
are functions from $\Main$ and $\Aux$.
The proof goes in two steps;
we formulate the first one as a separate lemma.

\begin{lem}\label{lem:no-groups}
Let $L$ be a language, let $\Main \dcup \Aux$ be a partition of the sorts of $L$, and
suppose that the only symbols in $L$ connecting $\Main$ and $\Aux$
are functions from (products of) $\Main$ sorts to $\Aux$ sorts.
Then any formula without $\Main$-quantifiers is equivalent to
a formula in family union form (in any theory).
\end{lem}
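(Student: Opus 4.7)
The plan is to isolate the bridge between $\Main$ and $\Aux$ by naming the finitely many $\Aux$-terms that actually touch $\xbar$ with fresh $\Aux$-variables, thereby reducing to a Boolean combination of ``purely $\Main$'' and ``purely $\Aux$'' pieces which can then be sorted into family union form.

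Concretely, I would first put $\phi(\xbar,\etabar)$ into prenex form with respect to its $\Aux$-quantifiers, writing $\phi = \bar Q\bar\gamma\,\tilde\phi(\xbar,\etabar,\bar\gamma)$ with $\tilde\phi$ quantifier free; this is legitimate because no quantifier is over a $\Main$-variable. By the hypothesis on $L$, every $\Aux$-sort subterm of $\tilde\phi$ that mentions $\xbar$ has, at its outermost level, the shape $f(s_1(\xbar),\dots,s_k(\xbar))$ with $f\colon\Main^k\to\Aux$ and pure $\Main$-terms $s_i$. Enumerate the maximal such subterms as $\tau_1(\xbar),\dots,\tau_N(\xbar)$, introduce fresh $\Aux$-variables $\thetabar=(\theta_1,\dots,\theta_N)$, and let $\tilde\phi'(\xbar,\etabar,\bar\gamma,\thetabar)$ be $\tilde\phi$ with each $\tau_j(\xbar)$ replaced by $\theta_j$. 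Then, pulling $\exists\thetabar$ past $\bar Q\bar\gamma$,
\[
\phi \iff \exists\thetabar\,\Bigl(\bar Q\bar\gamma\,\tilde\phi'(\xbar,\etabar,\bar\gamma,\thetabar)\wedge\bigwedge_{j}\tau_j(\xbar)=\theta_j\Bigr),
\]
and every atom of $\tilde\phi'$ is now either a pure $\Main$-atom in $\xbar$ or a pure $\Aux$-atom in $(\etabar,\bar\gamma,\thetabar)$.

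Next I would put $\tilde\phi'$ into complete disjunctive normal form over its $\Main$-atoms $A_1(\xbar),\dots,A_M(\xbar)$: for each sign pattern $u\in\{0,1\}^M$ set $\chi_u(\xbar):=\bigwedge_{\ell}A_\ell(\xbar)^{u_\ell}$, obtaining
\[
\tilde\phi' \iff \bigvee_{u}\bigl(\chi_u(\xbar)\wedge\xi_u(\etabar,\bar\gamma,\thetabar)\bigr)
\]
for quantifier-free pure-$\Aux$ formulas $\xi_u$. Since the $\chi_u$ partition the values of $\xbar$ and do not involve $\bar\gamma$, the quantifier block $\bar Q\bar\gamma$ distributes over the disjunction, yielding $\bigvee_u \chi_u(\xbar)\wedge\xi'_u(\etabar,\thetabar)$ with $\xi'_u:=\bar Q\bar\gamma\,\xi_u$. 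Pushing the outer $\exists\thetabar$ through and setting $\psi_u(\xbar,\thetabar):=\chi_u(\xbar)\wedge\bigwedge_j\tau_j(\xbar)=\theta_j$ gives
\[
\phi \iff \bigvee_{u}\exists\thetabar\,\bigl(\xi'_u(\etabar,\thetabar)\wedge\psi_u(\xbar,\thetabar)\bigr),
\]
which is in family union form: each $\psi_u$ is a conjunction of literals, and the required pairwise inconsistency holds because, for $u\ne u'$, $\chi_u\wedge\chi_{u'}$ already clashes on some $A_\ell$, and for the same $u$ but $\alphabar\ne\alphabar'$ in $\Aux$, the two clauses $\tau_j(\xbar)=\alpha_j$ and $\tau_j(\xbar)=\alpha'_j$ clash at an index $j$ where $\alpha_j\ne\alpha'_j$.

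The main obstacle I expect is justifying that $\bar Q\bar\gamma$ commutes with the disjunction $\bigvee_u$: this relies on the fact that for any fixed $\xbar$ exactly one $\chi_u(\xbar)$ is true, so both sides of the claimed equivalence collapse, at that $\xbar$, to $\bar Q\bar\gamma\,\xi_{u(\xbar)}$. Everything else is symbolic manipulation; the genuine use of the hypothesis on $L$ occurs in identifying the maximal mixed subterms as having the clean form $f(s_1(\xbar),\dots,s_k(\xbar))$, which is precisely what makes the substitution step well defined and keeps the equations $\tau_j(\xbar)=\theta_j$ literals of $L$.
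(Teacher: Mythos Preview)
Your approach is sound and reaches the same conclusion, but one justification sentence is wrong. You assert that \emph{every} $\Aux$-sort subterm of $\tilde\phi$ that mentions $\xbar$ has, at its outermost level, the shape $f(s_1(\xbar),\dots,s_k(\xbar))$ with $f\colon\Main^k\to\Aux$. This fails as soon as $L$ has any function symbol with an $\Aux$-sort among its arguments: if $g\colon\Aux^2\to\Aux$ and $f\colon\Main\to\Aux$, then $g(f(x),\eta)$ is an $\Aux$-term mentioning $x$ whose outermost symbol is $g$, not a $\Main\to\Aux$ function. What \emph{is} true, and what you need, is that the \emph{smallest} $\Aux$-sort subterm containing a given occurrence of a $\Main$-variable has that shape. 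So your $\tau_j$ should be the subterms of $\tilde\phi$ of the form $f(s_1(\xbar),\dots,s_k(\xbar))$ with $f\colon\Main^k\to\Aux$ (the ``transition terms'' where the term tree passes from $\Main$ to $\Aux$); these depend only on $\xbar$, they are never nested inside one another, and replacing each by $\theta_j$ does indeed leave only pure-$\Main$ and pure-$\Aux$ atoms. With this correction the rest of your argument---the prenex step, the complete DNF over the $\Main$-atoms $A_\ell(\xbar)$, the commutation of $\bar Q\bar\gamma$ past the disjunction, and the inconsistency check---goes through as written.

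The paper's proof carries out the same two moves (name the transition terms by fresh $\Aux$-variables; case-split on the pure-$\Main$ atoms) but does so one occurrence of a $\Main$-variable at a time, by induction on the number of such occurrences. Your version does everything globally via prenex form and complete DNF, which makes the final family-union shape visible in one step, at the price of an exponential blow-up in the disjunction; the paper's recursive argument is shorter to write but produces the same result upon unwinding.
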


\begin{proof}
Let $\phi$ be an $\Main$-qf formula.
We do an induction over the number of occurrences of main variables in
$\phi$. If no main variable appears in $\phi$, there is nothing to do.
Otherwise, choose a specific occurrence of a main variable $x$ in $\phi$.
We distinguish the following two cases:

(1) The atom $a$ containing $x$ is a relation on $\Main$ (applied to some terms
living completely in $\Main$).

(2) $x$ appears inside a term $t$ with range in $\Aux$.

In case (1), every other variable appearing in the atom $a$ is also
a main sort variable, so
$a$ does not depend on any of the quantified
variables of $\phi$, and we can ``do a case distinction on $a$'':
$\phi$ is equivalent to
\[
(a \wedge \phi[\tfrac\top a]) \vee (\neg a \wedge \phi[\tfrac\bot a])
.
\]
(Here, the notation $\phi[\frac rs]$ means: the formula obtained from $\phi$
by replacing all occurrences of $s$ by $r$, 
$\top$ means true and $\bot$ means false.)
Apply the induction hypothesis to $\phi[\frac\top a]$
and $\phi[\frac\bot a]$. After pulling the ``$a \wedge\strut$'' and
``$\neg a \wedge\strut$'' inside,
the result is in family union form.

In case (2), consider the smallest subterm $t'$ of $t$
containing $x$ whose range lies in $\Aux$. Then the outermost function of $t'$ is
a function from a product of some $\Main$-sorts to $\Aux$,
so $t'$ depends only on $\Main$-variables and in particular not on
quantified variables. Thus $\phi$ is equivalent to
\[
\exists \xi\,(t' = \xi \wedge \phi[\tfrac\xi{t'}])
.
\]
Applying induction to $\phi[\frac\xi{t'}]$ yields a formula
in family union form.
\end{proof}

Note that this lemma in particular implies that the negation of
a formula in family union form can again be brought into
family union form.

Now let us get to the main proof of this section:

\begin{proof}[Proof of Proposition~\ref{prop:no-groups}]
Let $\phi$ be a formula whose $\Main$-quantifiers we want to eliminate.
We use induction over the structure of $\phi$, i.e., we suppose that the
subformulas are already in family union form. By
Lemma~\ref{lem:no-groups}, it suffices to bring $\phi$ into a
form without $\Main$-quantifiers.

If $\phi$ is an atom, then there is nothing to do, and neither if
it is of the form $\neg \psi$ or $\psi_1 \wedge \psi_2$, so suppose
$\phi = \exists x\,\psi(x)$, where $x$ is a main sort variable and $\psi(x)$ is in family union form, i.e.:
\[
\phi(\ybar, \etabar) = \exists x\,
\bigvee_{i = 1}^k
\exists \thetabar\, \big(\xi_i(\etabar, \thetabar) \wedge \psi_i(x, \ybar, \thetabar)\big)
\]
Rewrite this as
\[
\bigvee_{i = 1}^k
\exists \thetabar\, \big(\xi_i(\etabar, \thetabar) \wedge \exists x\,\psi_i(x, \ybar, \thetabar)\big)
.
\]
Since $\psi_i(x, \ybar, \thetabar)$ is quantifier free,
the hypothesis of the proposition applies to $\exists x\,\psi_i(x, \ybar, \thetabar)$, and we get a formula without main sort quantifiers.
\end{proof}

\subsection{Removing the quantifier in $X + G'$}

At one point in the main proof of quantifier elimination, we will have
a subgroup $G' \subseteq G$ and
a set $X \subseteq G$ defined by a quantifier free formula of a particular form
and we will need to be able to define the set $X + G'$ without quantifiers.
This will be possible using the following two lemmas
which have nothing to do with model theory.

\begin{lem}\label{lem:sat}
Suppose we have an abelian group $G$, a subgroup $G' \subseteq G$ and a subset $X \subseteq G$
of the form
\[
X = (H_0 + a_0) \setminus \bigcup_{i = 1}^\nu (H_i + a_i)
\]
where $H_i$ are subgroups of $G$, $a_i \in G$, and where
$H_i + a_i \subseteq H_0 + a_0$ for $i\in\{1,\dots, \nu\}$ and $H_i + a_i \cap H_j + a_j = \emptyset$ for
$i,j \in\{1,\dots, \nu\}, i \ne j$.
Then for $x \in G$ we have $x \in X' := X + G'$ if and only if
\begin{gather*}
\tag{1}
x - a_0 \in H_0 + G' \quad\text{and}\\
\tag{2}
\sum_{\{1\le i \le \nu \mid x - a_i \in H_i + G'\}}
\hskip-1ex
((H_0 \cap G') : (H_i \cap G'))^{-1} < 1
.
\end{gather*}
(Here, we use the convention $\infty^{-1} = 0$.)
\end{lem}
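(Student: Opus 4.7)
The plan is to translate the condition $x \in X + G' =: X'$ into a combinatorial statement about cosets inside $G'$ and then reduce it to a coset-counting argument. For each $i \in \{0, 1, \dots, \nu\}$ let
\[
T_i := \{g' \in G' : x - g' \in H_i + a_i\};
\]
then $x \in X'$ iff some $g' \in T_0$ lies outside every $T_i$ with $i \ge 1$, i.e.\ $T_0 \setminus \bigcup_{i \ge 1} T_i \ne \emptyset$. A direct check shows that each $T_i$ is either empty or a coset of $K_i := H_i \cap G'$ in $G'$; in particular condition~(1) is precisely $T_0 \ne \emptyset$. Assuming (1), $T_0$ is a coset of $K := K_0$; each nonempty $T_i$ with $i \ge 1$ is a coset of $K_i \subseteq K$ contained in $T_0$ (using $H_i + a_i \subseteq H_0 + a_0$), and the $T_i$ with $i \ge 1$ are pairwise disjoint, inherited from the disjointness of the $H_i + a_i$.

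So the question reduces to the following: given a coset $T_0$ of a subgroup $K \subseteq G'$ together with pairwise disjoint subcosets $T_i \subseteq T_0$ of subgroups $K_i \subseteq K$, when is $T_0 \setminus \bigcup_i T_i \ne \emptyset$? Split the indices $i \ge 1$ with $T_i \ne \emptyset$ into $I_f := \{i : [K:K_i] < \infty\}$ and $I_\infty := \{i : [K:K_i] = \infty\}$, and form $K^* := \bigcap_{i \in I_f} K_i$, which still has finite index in $K$. Each $T_i$ with $i \in I_f$ decomposes as a disjoint union of $[K_i : K^*]$ cosets of $K^*$, and disjointness of the $T_i$ makes all the resulting $K^*$-cosets distinct. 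Hence the number of $K^*$-cosets of $T_0$ hit by $\bigcup_{i \in I_f} T_i$ equals
\[
\sum_{i \in I_f} [K_i : K^*] \;=\; [K : K^*] \sum_{i \in I_f} [K : K_i]^{-1},
\]
which, with the convention $\infty^{-1} = 0$, matches the sum appearing in condition~(2).

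Now compare this count to $[K:K^*]$, the total number of $K^*$-cosets in $T_0$. If the sum in (2) is $\ge 1$, then by disjointness it must equal $1$, and the finite-index $T_i$ already exhaust every $K^*$-coset of $T_0$; hence $\bigcup_i T_i \supseteq T_0$ and $x \notin X'$. Conversely, if the sum is $< 1$, some $K^*$-coset $C \subseteq T_0$ avoids all $T_i$ with $i \in I_f$. For $j \in I_\infty$ the intersection $C \cap T_j$ is either empty or a coset of $K_j \cap K^*$ in $C$, and $K_j \cap K^*$ has infinite index in $K^*$. The key step is then to invoke B.H.~Neumann's covering theorem, which says that a group is never a union of finitely many cosets of infinite-index subgroups; applied to the $K^*$-torsor $C$ it produces an element of $C$ avoiding all $T_j$ with $j \in I_\infty$, hence an element of $T_0 \setminus \bigcup_i T_i$, so $x \in X'$.

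The main obstacle is precisely the infinite-index contribution: condition~(2) is set up so that infinite indices count as $0$, but a priori finitely many cosets of infinite-index subgroups could conspire to cover the $K^*$-coset $C$ and thereby invalidate the ``sum $<1$ suffices'' direction. Neumann's covering theorem is exactly the tool required to exclude this possibility, and once it is in hand the rest is elementary counting of cosets of the common refinement $K^*$.
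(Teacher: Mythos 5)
Your proof is correct and follows essentially the same strategy as the paper: translate $x \in X'$ into the statement that $T_0 \setminus \bigcup_{i\geq 1} T_i$ is nonempty, where $T_i := (x - a_i + H_i) \cap G'$, observe that each nonempty $T_i$ is a coset of $K_i := H_i \cap G'$ lying inside the $K_0$-coset $T_0$ and that the $T_i$ with $i \geq 1$ remain pairwise disjoint, and then count. The one place you go further than the paper is in handling the cosets of infinite index. The paper disposes of this with the parenthetical ``count elements in $C_0/D$, where $D$ is the intersection of all those $C_i$ which have finite index in $C_0$'', which correctly handles the finite-index contribution but does not by itself explain why a $D$-coset (your $K^*$-coset $C$) that is missed by the finite-index $T_i$ cannot nevertheless be covered by the remaining infinite-index ones. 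Your explicit appeal to B.~H.~Neumann's lemma --- a group is never a finite union of cosets of infinite-index subgroups --- is exactly the ingredient needed there, and is the right general tool for the lemma as stated (arbitrary subgroups $H_i$). So your write-up is in fact a bit more complete than the paper's. It may be worth noting that in the place where this lemma is actually applied, in Part~2 of the proof of Theorem~\ref{thm:qe-form}, the subgroups $H_i$ form a chain, and in that special situation each infinite-index $T_j$ sits inside a single $K^*$-coset, so the infinite-index $T_j$ can hit only finitely many of the infinitely many $K^*$-cosets of $C$ and Neumann's lemma can be avoided; but for the general statement your route is the natural one.
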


\begin{proof}
The condition $b \in X'$ is equivalent to $X \cap (b + G') \ne \emptyset$.
Write
\[
X \cap (b + G') = C_0 \setminus \bigcup_{i = 1}^\nu C_i,
\]
with $C_i := (a_i + H_i) \cap (b + G')$.
Then $C_i$ is non-empty if and only if $b - a_i \in H_i \cap G'$,
and if it is non-empty, then it is of the form
$c_i + H_i \cap G'$.

Non-emptiness of $C_0$ is just condition~(1) on $b$ in the lemma,
so suppose now that $C_0$ indeed is non-empty. The question is now whether
the union $\bigcup_{i = 1}^\nu C_i$ (which is disjoint) contains all of $C_0$.
The sum in condition~(2)
goes exactly over those $i \ge 1$ for which $C_i$ is non-empty, and the
summand is the proportion of $C_i$ in $C_0$. Hence
$\bigcup_{i = 1}^\nu C_i = C_0$ if and only if the sum is $1$.
(To make this more formal, count elements in
$C_0/D$, where $D$ is the intersection of all those $C_i$ which have
finite index in $C_0$.)
\end{proof}

The next lemma will be helpful to make condition (2) from the previous lemma definable.

\begin{lem}\label{lem:power-sum}
Suppose that $n \in \NN$, $n \ge 2$ and that $q_1, \dots, q_\nu$ are powers of $n$.
Then there exists an $N \in \NN$ depending only on $n$ and $\nu$
such that
\[
\sum_{i=1,\dots, \nu} q_i^{-1} \ge 1
\quad\iff\quad
\sum_{\substack{i=1,\dots, \nu\\q_i < n^N}} q_i^{-1} \ge 1
.
\]
\end{lem}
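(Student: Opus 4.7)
The plan is to note that $(\Leftarrow)$ is immediate since the truncated sum does not exceed the full sum, and concentrate on $(\Rightarrow)$. After reindexing, I would sort the $q_i$'s so that $q_1 \le q_2 \le \dots \le q_\nu$, write $q_i = n^{e_i}$ with $e_1 \le \dots \le e_\nu$, and set $\sigma_j := \sum_{i \le j} q_i^{-1}$. Let $j_0$ be the smallest index with $\sigma_{j_0} \ge 1$. It suffices to bound $e_{j_0}$ by a constant depending only on $n$ and $\nu$: for then any integer $N$ exceeding this bound ensures that the sum in the right-hand side of the lemma is at least $\sigma_{j_0} \ge 1$.

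If $j_0 = 1$ then $q_1 = 1$ and we are done, so assume $j_0 \ge 2$. Set $E := e_{j_0-1}$. Since every $e_i$ with $i \le j_0-1$ is at most $E$, the rational $\sigma_{j_0-1}$ has denominator dividing $n^E$; combined with $\sigma_{j_0-1} < 1$ this forces $\sigma_{j_0-1} \le (n^E-1)/n^E$. The defining property of $j_0$ then gives
\[
n^{-e_{j_0}} \ge 1 - \sigma_{j_0-1} \ge n^{-E},
\]
so $e_{j_0} \le E$, and the sort order upgrades this to $e_{j_0} = E$. The inequalities above are therefore equalities, i.e., $\sigma_{j_0-1} = (n^E-1)/n^E$.

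Multiplying through by $n^E$, this last identity becomes
\[
n^E - 1 \;=\; \sum_{i=1}^{j_0-1} n^{E - e_i},
\]
so $n^E - 1$ is expressed as a sum of $j_0 - 1 \le \nu - 1$ non-negative powers of $n$. The main non-trivial ingredient is then the following purely number-theoretic fact: any representation of $n^E - 1$ as a sum of $t$ non-negative powers of $n$ (with repetition) satisfies $t \ge (n-1)E$. Indeed, the base-$n$ digit sum of $n^E - 1$ equals $(n-1)E$, and any representation $\sum_k a_k n^k$ with $\sum_k a_k = t$ can be transformed into the canonical base-$n$ expansion by repeatedly carrying (replacing $n$ copies of $n^k$ by one copy of $n^{k+1}$), each carry dropping $\sum_k a_k$ by exactly $n-1$; since the process terminates at digit sum $(n-1)E$, the initial $t$ is at least that large. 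Plugging this in gives $\nu - 1 \ge (n-1)E$, hence $e_{j_0} = E \le (\nu-1)/(n-1)$, and $N := \lfloor (\nu-1)/(n-1) \rfloor + 1$ does the job.

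The only step that requires real thought is the rigidity observation that $\sigma_{j_0-1}$ must equal $(n^E-1)/n^E$ with $e_{j_0} = e_{j_0-1}$; after this is established, the rest is a clean digit-sum computation in base $n$.
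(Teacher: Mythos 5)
Your proof is correct, and it lands on the same bound $N = \lfloor (\nu-1)/(n-1)\rfloor + 1$ that the paper uses, but the route is organized quite differently. The paper tracks the base-$n$ digit sum $d_k$ of each fractional partial sum $s_k$ directly, proving $d_k \le k$ inductively (each added term raises one digit by~$1$, and carries only lower the total), and then derives a contradiction: if some $q_\ell \ge n^N$ is needed to push the sum past $1$, then $s_{\ell-1} \ge 1 - n^{-N}$ forces the first $N$ fractional digits of $s_{\ell-1}$ to all equal $n-1$, so $d_{\ell-1} \ge N(n-1) > \nu - 1 \ge d_{\ell-1}$. Your argument instead first establishes a rigidity fact the paper never needs --- that at the crossing index $j_0$ one must have $e_{j_0} = e_{j_0-1} =: E$ and $\sigma_{j_0-1}$ is \emph{exactly} $(n^E-1)/n^E$ --- which cleanly converts the problem into a statement about representations of the integer $n^E - 1$ as a sum of powers of $n$; the carry/digit-sum observation then enters in its most classical integer form. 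The underlying combinatorics (digit sum decreases by $n-1$ per carry) is shared, but your version trades the paper's one-shot inductive bound for a two-step argument whose first step is a sharper structural claim. Both are clean; yours is slightly longer but makes the exact equality at the crossing point explicit, which the paper's soft counting avoids.
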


\begin{proof}
Choose $N$ such that $\nu < N \cdot (n - 1) + 1$.
Without loss, $q_1 \le \dots \le q_\nu$.
Set $s_k := \sum_{i = 1}^{k} q_i^{-1}$ and let
$d_k$ be the digit sum of $s_k$ in base $n$.
Inductively, one proves $d_k \le k$.
If the claim of the lemma is false, then there exists an $\ell \le \nu$  with
$q_\ell \ge n^N$ such that $s_{\ell-1} < 1 \le s_{\ell}$.
This implies $d_{\ell-1} \ge N\cdot (n-1)$,
contradicting $d_{\ell-1} \le \ell - 1 \le \nu - 1 < N \cdot (n - 1)$.
\end{proof}

\subsection{Actually eliminating the quantifiers}

\begin{proof}[Proof of Theorem~\ref{thm:qe-form}]

As announced, we prove Theorem~\ref{thm:qe-form} using Proposition~\ref{prop:no-groups},
i.e., we have to show that if $\phi(x, \ybar, \etabar)$
is a quantifier free $\Lsyn$-formula, then
$\exists x\,\phi(x, \ybar, \etabar)$ is equivalent to an $\Main$-qf $\Lsyn$-formula.
Since the language $\Lint$ is more intuitive, we start by translating $\phi$
into an $\Lint$-formula using Proposition~\ref{prop:L2-to-L3}.
The result is in family union form, i.e., we have to eliminate ``$\exists x$'' from
a formula of the form
\[
\exists x\,
\bigvee_{i = 1}^k
\exists \thetabar\, \big(\xi_i(\etabar, \thetabar) \wedge \psi_i(x, \ybar, \thetabar)\big)
.
\]
By pulling this quantifier inside, it suffices to eliminate the quantifier
of $\exists x\,\psi_i(x,\ybar,\thetabar)$.
In other words, we now need to eliminate the quantifier of
$\exists x\,\phi(x, \ybar, \etabar)$ when
$\phi(x,\ybar, \etabar)$ of the form
\[\tag{*}
\phi(x,\ybar, \etabar) =
   \bigwedge_{i = 1}^k r_i x \mathrel{(\diamrel_i)}_{\eta_i} y_i + k_{\eta_i}
\]
with $r_i \in \NN$, $\diamond_i \in
\{=,\ne,<,>,\le, \ge,\equiv_m,\not\equiv_m,\equiv^{[n]}_m,\not\equiv^{[n]}_m\}$.

We will show that $\exists x\,\phi(x, \ybar, \etabar)$ is equivalent to
an $\Main$-qf formula in the language $\Lsyn \cup \Lint$; this is enough, since
afterwards, we can apply Proposition~\ref{prop:L3-to-L2} to translate the
$\Lint$-predicates into $\Lsyn$.

To simplify the exposition, let us choose
parameters $\bbar \in \Main$, $\alphabar \in \Aux$
and consider $\phi(x, \bbar, \alphabar)$; we will denote this by $\phi(x)$
for short. Our strategy is to successively simplify $\phi(x)$;
of course, the whole point is that this is done in a way depending definably on
the parameters $\bbar$ and $\alphabar$.

If $x \diamrel_\alpha b + k_\alpha$ is a literal of $\phi$, we will
write $b_*$ for a representative in $G$ of $b + k_\alpha$,
so that $x \diamrel_\alpha b + k_\alpha$ is equivalent to
$x \diamrel_\alpha b_*$; if $k = 0$ or $G/G_\alpha$ is dense, we set $b_* := b$.
We will sometimes use $x \diamrel_\alpha b_*$
as a short hand notation for $x \diamrel_\alpha b + k_\alpha$.
Of course, we are not allowed to use $b_*$ in the
resulting quantifier free formula. However, if we have an element $\alpha' \ge \alpha$,
then a condition of the form $t\diamrel'_{\alpha'} b_*$ can easily
be expressed using $b$ and $k$ instead of $b_*$:
\[
t\diamrel'_{\alpha'} b_*
\quad\iff\quad
(\alpha' = \alpha \wedge t \diamrel'_{\alpha} b + k_\alpha)
\,\,\vee\,\,
(\alpha' > \alpha \wedge t \diamrel'_{\alpha'} b)
;
\]
we will use this without further mentioning.

Now let us get to work.
First we get rid of the factors $r_i$ in (*). To this end, note that
in each literal, multiplying both sides by any non-zero integer $r$ doesn't
change the set defined by that literal if additionally we do the following:
\begin{itemize}
\item
in literals with $\equiv_m$, $\not\equiv_m$, $\equiv^{[n]}_m$, $\not\equiv^{[n]}_m$, we also
multiply $m$ (and $n$) by $r$ (this uses Lemma~\ref{lem:cong-mult});
\item
we turn inequalities around if $r < 0$.
\end{itemize}
In this way, we can make all $r_i$ equal to one single $r$.
After that, we replace $rx$ by a new variable $x'$ and replace $\exists x$
by $\exists (x' \in rG)$.

The remainder of the proof will consist of two big parts:
in the first one, we get rid of the inequalities ($\ne, <, >, \le, \ge$);
in the second one, we treat the congruence conditions
($\equiv_m,\not\equiv_m,\equiv^{[n]}_m,\not\equiv^{[n]}_m$).

\medskip
\textbf{Part 1: treating inequalities}

Our goal in this part is to
reduce the quantifier elimination problem to formulas $\phi(x)$ of the form
\[
\tag{**}
\phi'(x)
\quad\text{or}\quad
x =_\delta b_* \wedge \phi'(x),
\]
where the atoms of $\phi'(x)$ use only $\equiv_m$ and $\equiv^{[n]}_m$.

We start by replacing literals of the form $x =_\alpha b_*$ and $x \ne_\alpha b_*$
by $x \ge_\alpha b_* \wedge x \le_\alpha b_*$
and $x >_\alpha b_* \vee x <_\alpha b_*$, respectively. In the second case,
we treat each disjunct separately. (Replacing $x =_\alpha b_*$ by inequalities
might seem strange at first sight, since later, we want to get back to equalities.
However, recall that after all, $x =_\alpha b_*$ defines an interval.)

Next, reduce to the case where $\phi(x)$ contains at most one lower and one upper bound:
if $\phi = \phi'' \wedge \psi_1 \wedge \psi_2$, where $\psi_i$ are two bounds
on the same side, then $\exists x \,\phi$ is equivalent to
$(\exists x\, (\phi'' \wedge \psi_1)) \wedge (\exists x\, (\phi'' \wedge \psi_2))$.
Thus $\phi(x)$ is now of the form
\[
\tag{***}
\phi(x) =\quad
c_* \vartriangleleft_\alpha x \vartriangleleft'_{\alpha'} c'_*
\,\wedge\, \phi'(x),
\]
where $\mathord{\vartriangleleft}, \mathord{\vartriangleleft'} \in \{\le, <, \text{no condition}\}$ and
where the atoms of $\phi'(x)$ use only $\equiv_m$ or $\equiv^{[n]}_m$.
Such an atom defines a union of cosets of $mG$, hence if we let $m_0$ be the least common multiple of all occurring $m$, then $\phi'(G)$ is a union of cosets of $m_0G$.
We fix this $m_0$ for the remainder of the proof.

If $\phi(x)$ has no bounds, then it is already of the form (**).
If $\phi(x)$ has only one bound, say, a lower one, then removing that bound does not
change the truth of $\exists x\,\phi(x)$. Indeed, if an element $a \in G$ satisfies $\phi'(x)$
but does not satisfy $c_* \vartriangleleft_\alpha x$, all elements of
$a + m_0G$ satisfy $\phi'(x)$ and in that set, we can find one which also satisfies the bound. Hence for the remainder of part~1, we assume that $\phi(x)$ has two bounds.

If $\alpha \ge \alpha'$ in (***), we may suppose that $c_* \vartriangleleft_\alpha c'_*$,
since otherwise $\phi(x)$ defines the empty set. Similarly, if
$\alpha' \ge \alpha$, we may suppose that $c_* \vartriangleleft'_{\alpha'} c'_*$.

Let $\gamma$ be an auxiliary element satisfying
$\gamma \asymp \max\{\alpha, \alpha', \cane_{m_0}(c_* - c'_*)\}$.
Recall that $c_*$ is a representative of $c + k_\alpha$ (for some $c \in G$ and $k \in \ZZ$),
but since $\cane_{m_0}(c - c_*) \le \alpha$, $\gamma$ does not depend on the choice of $c_*$
(and similarly for $c'_*$).
By Lemma~\ref{lem:prime-e}, $\gamma$ is definable. (Formally, it is an element of
one of finitely many auxiliary sorts; we do a case distinction on the sort.)

Suppose that $\alpha < \gamma$. We claim that then, weakening the lower bound
from $c_* \vartriangleleft_\alpha x$ to $c_* \le_\gamma x$ does not change the truth
value of the formula $\exists x\,\phi(x)$.
In other words, we claim that if there exists an
$a \in G$ with $a =_\gamma c_* \,\wedge\, a \vartriangleleft'_{\alpha'} c'_*
\,\wedge\, \phi'(a)$, then we can
find an $a'$ satisfying $\phi(x)$.
If $c_* \vartriangleleft_\alpha a$, there is nothing to do.
Otherwise, we can choose an element $a_0 \in m_0G_\gamma$ such that
$c_* \vartriangleleft_\alpha a + a_0 =: a'$.
By construction, $a'$ satisfies $\phi'(x)$ and the lower bound.
Concerning the upper bound: if
$\gamma = \alpha'$, then $a' =_\gamma c_* \vartriangleleft'_{\alpha'} c'_*$
implies $a' \vartriangleleft'_{\alpha'} c'_*$.
If, on the other hand, $\gamma > \alpha'$, then by definition of $\gamma$ we have
$\gamma = \cane_{m_0}(c_* - c'_*)$, hence $c_* \ne_\gamma c'_*$,
and hence $a' =_\gamma c_* <_\gamma c'_*$,
which again implies $a' \vartriangleleft'_{\alpha'} c'_*$.

We do the same with the upper bound and thus get a formula of the form
\[
c_* \vartriangleleft_\gamma x \vartriangleleft'_{\gamma} c'_*
\wedge \phi'(x),
\]
where $\gamma \ge \cane_{m_0}(c_* - c'_*)$.

Now we distinguish two cases, depending on whether
$c'_* - c_* >_\gamma (m_0+1)_\gamma$ or not. (Recall that if $G/G_\gamma$ is dense, then
by definition this is equivalent to $c'_* - c_* >_\gamma 0$.)

Suppose first that the condition is false.
If $G/G_\gamma$ is dense, then this implies $c_* =_\gamma c'_*$, so
$\phi(x)$ can only
be consistent if both inequalities are non-strict, and in that case,
it is equivalent to $x=_\gamma c_* \wedge \phi'(x)$, which is of the form (**).
If $G/G_\gamma$ is discrete, then $c'_* - c_* =_\gamma \ell_\gamma$
for some $\ell \le m_0+1$.
Thus
$\phi(x)$ is equivalent to the disjunction of
finitely many formulas of the form
\[
x =_\gamma c_* + i_\gamma \wedge \phi'(x)
.
\]
More precisely, $i$ runs from $0$ or $1$ (depending on $\vartriangleleft$)
to $\ell - 1$ or $\ell$ (depending on $\vartriangleleft'$).

Now suppose that $c'_* - c_* >_\gamma (m_0 + 1)_\gamma$.
Then there exists an element $d \in G$ satisfying
$0 <_{\gamma} (m_0 + 1)d <_{\gamma} c'_* - c_*$.
(If $G/G_\gamma$ is discrete, then choose for $d$ any representative
of $1_\gamma$.)
Using this, we will show that $\exists x\,\phi(x)$ is equivalent to
\[
\exists x\, (x =_{\delta} c_*\wedge \phi'(x)),
\]
where $\delta := (\cane_{m_0}(c'_* - c_*))+$.
(Again, $\delta$ is definable by Lemma~\ref{lem:prime-e}.)

It is clear that $\phi(x)$ implies
$x =_{\delta} c_*$, since $c'_* =_{\delta} c_*$, so it remains to
show that if there exists an $a \in c_* + G_{\delta}$ satisfying $\phi'(x)$,
then there exists an $a' \in G$ which additionally lies between the bounds.

The inequality $\canc_{m_0}(a - c_*) \le \cane_{m_0}(a - c_*) \le \cane_{m_0}(c_* - c'_*) \le \gamma$
means that for any convex subgroup $H \subseteq G$ strictly containing $G_\gamma$, we have
$a - c_* \in H + m_0G$. In particular, since $d \ne_{\gamma} 0$,
\[
a - c_* \in \langle d\rangle\conv + m_0G = [d, (m_0 + 1)d] + m_0G
.
\]
Choose $a_0 \in (a - c_* + m_0G) \cap [d, (m_0 + 1)d]$ and set $a' := a_0 + c_*$.
Then $a'$ satisfies $\phi'(x)$ since it differs from $a$ by an element of $m_0G$,
and $0 <_\gamma d \le a_0 \le (m_0 + 1)d <_\gamma c'_*-c_*$ implies that $a'$ also
satisfies the bounds.

\medskip
\textbf{Part 2: treating congruences}

Our formula $\phi(x)$ is now of the form
\[
\phi'(x) \quad \text{or} \quad x =_\gamma c_* \wedge \phi'(x)
,
\]
where the atoms of $\phi'(x)$ are of the form
$x \equiv_{m,\alpha} b_*$ or $x \equiv^{[n]}_{m,\alpha} b_*$.
Using Lemma~\ref{lem:cong-factorize}, we can suppose that each $m$ and each $n$ is a power of a prime, and using Lemma~\ref{lem:m-mid-n}, we get rid of all those atoms
$x \equiv^{[n]}_{m,\alpha} b$ where $m$ and $n$ are powers of different primes.
By the Chinese remainder theorem, we can eliminate the quantifier separately
for each of the subformulas of $\phi'$ corresponding to the different primes.
In other words, we may assume that all atoms of $\phi'$ are of the form
$x \equiv_{p^r,\alpha} b_*$ or $x \equiv^{[p^s]}_{p^r,\alpha} b_*$ for one
single prime $p$ which we fix for the remainder of the proof.
Moreover, in $\equiv^{[p^s]}_{p^r,\alpha}$ we may assume $s \ge r$ (again by
Lemma~\ref{lem:m-mid-n}).

From now on, we also fix $r$ to be the maximal
exponent of $p$ appearing in the atoms in the above way (both, in $\equiv_{p^r,\alpha}$ and in $\equiv^{[p^s]}_{p^r,\alpha}$); in particular, $\phi'(G)$ consists
of entire cosets of $p^rG$.

In general, if $\phi(x) = \phi_0(x) \wedge \phi_1(x)$ and $H \subseteq G$ is any subgroup such that
$\phi_1(G)$ consists of entire cosets of $H$,
then replacing $\phi_0$ by a formula defining $\phi_0(G) + H$ does not change
the truth of $\exists x\,\phi(x)$; we will apply this enlargement argument several times.
Since $\phi'(G)$ is a union of cosets of $p^rG$, we already 
can replace $x=_\gamma c_*$ by $x \equiv_{p^r,\gamma} c_*$, i.e., without loss
there is no literal $x=_\gamma c_*$.

Now we prove quantifier elimination by induction on $r$.
If $r = 0$, then $\exists x\,\phi(x)$ is equivalent to $\phi(0)$.
For the induction step, suppose $r > 0$ and
write $\phi = \phi_0 \wedge \phi_1$, where $\phi_0$ contains the atoms
$x \equiv_{m,\alpha} b_*$, $x \equiv^{[n]}_{m,\alpha} b_*$ with
$m = p^r$ and $\phi_1$ contains the atoms with $m \le p^{r-1}$.
By the enlargement argument, we are done with the induction step
if we can show the following:
\begin{itemize}
\item[(a)]
the set $\phi_0(G) + p^{r-1}G$
is definable by a formula $\phi'_0$ using only atoms of the form
$x \equiv_{p^{r-1},\alpha} b_*$, $x \equiv^{[s]}_{p^{r-1},\alpha} b_*$,
with $r$ as given and $s \ge r - 1$ arbitrary;
\item[(b)]
$\phi'_0$ depends on the parameters of $\phi_0$ in an $\Main$-qf definable way.
\end{itemize}

The atoms
$x \equiv_{p^r,\alpha} b_*$, $x \equiv^{[p^s]}_{p^r,\alpha} b_*$ of $\phi_0$
define cosets of groups, and these groups are totally ordered by inclusion:
\[
\dots\subseteq
G_{\alpha} + p^rG \subseteq \dots \subseteq
G^{[p^{s+1}]}_{\alpha} + p^rG \subseteq G^{[p^s]}_{\alpha} + p^rG
\subseteq \dots \subseteq G_{\alpha'} + p^rG
\subseteq\dots
\]
for all $\alpha < \alpha'$ and all $s \ge r$.
In particular, any two such cosets $H + b_*$, $H' + b'_*$
are either disjoint or contained
in one another. Moreover, whether $H + b_* \subseteq H' + b'_*$ or not is definable.
Using this, we can simplify $\phi_0$ such that
it has at most one positive literal, and all negative literals
exclude pairwise disjoint sets. Now $\phi(G)$ satisfies the prerequisites of
Lemma~\ref{lem:sat}:
in that lemma, let $H_0 + a_0$ be the set defined by the positive literal
of $\phi_0$ (or $H_0 =G, a_0 = 0$ if there is no positive literal),
let $H_i + a_i$ be the sets excluded by the negative literals,
and set $G' := p^{r-1}G$. (The $a_i$ are the representatives denoted by $b_*$ before.)
To get our desired formula defining $X' = \phi_0(G) + p^{r-1}G$,
it remains to verify that conditions (1) and (2)
of Lemma~\ref{lem:sat} are $\Main$-qf definable, where $x$ only appears in atoms as in (a).

For each $i\in \{0,\dots, \nu\}$
we have
\[
H_i = G
\quad\text{or}\quad
H_i = G_{\alpha} + p^rG
\quad\text{or}\quad
H_i = G^{[p^s]}_{\alpha} + p^rG
,\]
so the condition $x - a_i \in H_i + p^{r-1}G$
is definable by
\[
x = x
\quad\text{or}\quad
x \equiv_{p^{r-1},\alpha} a_i
\quad\text{or}\quad
x \equiv^{[p^s]}_{p^{r-1},\alpha} a_i
.\]
This settles definability of (1), and it allows us to do a case distinction which fixes the set
the sum (2) runs over. Let $I$ be that set and set $q_i := \big((H_0 \cap p^{r-1}G) : (H_i \cap p^{r-1}G))$ for $i \in I$.
By Lemma~\ref{lem:cong-mult}, for $i \in I \cup \{0\}$ we can write $H_i \cap p^{r-1}G$ as $p^{r-1}H_i'$ with
\[
H'_i = G
\quad\text{or}\quad
H'_i = G_{\alpha} + pG\quad\text{or}\quad
H'_i = G^{[p^{s-r+1}]}_{\alpha} + pG
,\]
so $q_i = (H'_0 : H'_i)$ is the cardinality of a quotient treated by
Lemma~\ref{lem:dim-def}. Thus each $q_i$ is either infinite or a power of $p$,
and the conditions $q_i = p^\ell$ (for $\ell \in \NN_0$) are $\Main$-qf definable.
By Lemma~\ref{lem:power-sum}, there exists a bound $N$
such that $\sum_{i \in I}q_i < 1$ iff $\sum_{i \in I,q_i < p^N}q_i < 1$.
The latter is equivalent to a finite boolean combination of conditions
of the form $q_i = p^\ell$ (for $i \in I$ and $\ell < N$).
Hence, condition (2) is definable, too, and we are done.
\end{proof}

\section{Examples}
\label{sect:ex}

In this section, we give some examples which should help the reader understanding
the languages which we define. These examples show that large parts of the
languages is indeed necessary. More detailed examples explicitly concerning $\Lint$ are given in \cite{i.oAGlang}; similar motivating examples, but presented from a different point of view
are given in \cite{Sch.oAGmodComp}.

\subsection{Concrete examples illustrating the sort $\Auxc_n$}
\label{subsect:exc}

Set $G = \ZZ \oplus \ZZ$ with lexicographical order.
We determine the sort $\Auxc_n$ for $n \ge 2$. For this, we have to go through all
elements $a \in G \setminus nG$ and find the largest convex
subgroups $H = G_{\canc_n(a)} \subseteq G$ such that $H + nG$ does not contain $a$.
Equivalently, $H$ is the largest convex
subgroup which is disjoint from $a + nG$.

Obviousely, $H$ only depends on the class of $a$ modulo $nG$.
If $a = (0,z)$ for $z \notin n\ZZ$, then we have $H = \{(0,0)\} =: G_0$;
if $a = (z,z')$ for $z \notin n\ZZ$ and
$z' \in \ZZ$ arbitrary, then $H = \{0\} \times \ZZ =: G_1$.
Thus $\Auxc_n$ consists of two elements which correspond
to the groups $G_0$ and $G_1$. (For $a \in nG$, by definition we also
have $G_{\canc_n(a)} = G_0$.)

In this example, all sorts $\Auxc_n$ are the same.
Now consider the group $G = \ZZ[\frac15] \oplus \ZZ$ instead.
The sorts $\Auxc_n$ for $n \ne 5^r$ are the same as before; however, the sort $\Auxc_{5^r}$ now consists of a single element, since modulo $5^rG$, any element of $G$ is equivalent
to an element of the form $(0,z)$.

In these examples, the sorts $\Auxe_n$ and $\Auxe^+_n$ do not yield any new
non-trivial convex subgroups of $G$:
$G_{\cane_n(a)}$ is $G_0$ if $a \in G_1$ and $G_1$ otherwise, and
$G_{\cane_n(a)+}$ is $G_1$ if $a \in G_1$ and $G$ otherwise.
To get interesting new convex subgroups, we have to consider infinite lexicographical
products.

\subsection{Infinite lexicographical products illustrating $\Auxe_n$ and $\Auxe^+_n$}
\label{subsect:exe}

Let $I$ be any ordered set, and
let $G := \bigoplus_{i \in I} \ZZ$ be the group with lexicographical order
``with significance according to $I$''. More precisely,
for $a = (a_i)_{i \in I} \in G$, set $v(a) := \max\{i \in I \mid a_i \ne 0\}$ if $a \ne 0$ and
$v(0) := -\infty$. (This is well-defined, since only finitely many $a_i$ are non-zero.)
Now define the order on $G$ by $a > 0$ iff $a \ne 0$ and $a_{v(a)} > 0$.

For $j \in I$, let us write $g_j$ for the map $\ZZ \to G$ sending $\ZZ$ to the $j$-th summand
of $G$. Now let us determine $\Auxc_n$ (for $n \ge 2$).
For $j \in I$, the largest convex subgroup not intersecting $g_j(1) + nG$
is $H_{<j} := \{g \in G \mid v(g) < j\}$, thus we get an injection $I \inject \Auxc_n$.
For arbitrary $a = (a_i)_{i \in I} \in G \setminus nG$, we do not get more groups:
$G_{\canc_n(a)}$ is equal to $H_j$, where $j \in I$ is the largest index such that $a_j \notin n\ZZ$.
Thus $\Auxc_n$ is equal to $I$, possibly enlarged by one element
corresponding to the group $\{0\}$ (since $G_{\canc_n(a)} = \{0\}$ for
$a \in nG$). In particular, $I$ can be interpreted in $G$.

Now consider the sorts $\Auxe_n$ and $\Auxe^+_n$.
The group $G_{\cane_n(a)}$ is the union of all $H_{<j}$ not containing $a$, so it is
equal to $H_{<v(a)}$; still nothing new. However, $G_{\cane_n(a)+}$ is the intersection
of all $H_{<j}$ containing $a$, i.e. $G_{\cane_n(a)+} = H_{\le v(a)} := \{g \in G \mid v(g) \le v(a)\}$ which
might be a group which we did not have before.

Now modify our example by choosing a subset $I' \subseteq I$ and by replacing, for each
$j \in I'$, the factor $\ZZ$ of $G$ by $\QQ$.
Then, $\Auxc_n$ parametrizes only those groups $H_{< j}$
for which $j \in I \setminus I'$. However, elements $a \in G$ with $v(a) \in I'$
can still be used to obtain elements of $\Auxe_n$ and $\Auxe^+_n$; thus now all
three sorts can be really different.
To give an extreme example, take $I = \RR$ and $I' = \RR \setminus \QQ$;
then, as ordered sets, we have $\Auxc_n \cong \{-\infty\} \dcup \QQ$,
whereas $\Auxe_n \cong \Auxe^+_n \cong \{-\infty\} \dcup \RR$.

\subsection{An example for $G^{[n]}_\alpha$}
\label{subsect:ex[n]}

In general, the group
\[\tag{*}
H_1 := G^{[n]}_\alpha = \bigcap_{H \supsetneq G_{\alpha}} (H + nG)
\]
(where $H$ runs over convex subgroups of $G$)
is not of the form $H_0 + nG$ for any convex subgroup $H_0$ of $G$.
Here is an example. We use the notation from Section~\ref{subsect:exe}.
Let $I = \NN$,
but with reversed order; set $G' := \bigoplus_{i \in I} \ZZ$ (ordered
as in Section~\ref{subsect:exe}),
and let $G$ be the subgroup of $G'$ consisting of those $(a_i)_{i\in I} \in G'$
with $\sum_i a_i \in n\ZZ$ (for any fixed $n \ge 2$).

Choose $\alpha := \canc_n(0)$ and define $H_1$ by (*). Then $G_\alpha = \{0\}$, and
the largest convex subgroup of $G$ contained in $H_1$ is $\{0\}$, so
the only candidate of the form $H_0 + nG$ which could be equal to $H_1$ is
$nG$ itself.

Any element $(a_i)_{i\in I} \in nG$ satisfies $\sum_i a_i \in n^2\ZZ$.
On the other hand, for any non-trivial convex subgroup $H \subseteq G$, we have
$H + nG = H + nG'$, since the condition $\sum_i a_i \in n\ZZ$ can always be satisfied by adding an element of $H$. Thus $H_1 = nG'$, which is strictly bigger than $nG$.

\bibliographystyle{amsplain}
\providecommand{\bysame}{\leavevmode\hbox to3em{\hrulefill}\thinspace}
\providecommand{\MR}{\relax\ifhmode\unskip\space\fi MR }
\providecommand{\MRhref}[2]{%
  \href{http://www.ams.org/mathscinet-getitem?mr=#1}{#2}
}
\providecommand{\href}[2]{#2}

\end{document}